\newcommand{\vect}[1]{\boldsymbol{#1}}
\renewcommand{\d}{\ \mathrm{d}}
\newcommand{\z}{\phantom{0}}
\renewcommand{\d}{\mathrm{d}}
\newtheorem{ass}{Assumption}
\newtheorem{thm}{Theorem} 
\newtheorem{prop}{Proposition}
\newtheorem{rmk}{Remark}
\numberwithin{equation}{section}
\renewcommand{\d}{\mathrm{d}}
\newcommand{\dt}{\mathrm{dt}}
\def\R1{\color{blue}}
\definecolor{ao(english)}{rgb}{0.1, 0.5, 0.1}
\renewcommand{\S}{\mathcal{S}}
\newcommand{\vecx}{\boldsymbol{x}}
\begin{document}

\begin{frontmatter}

\title{  An efficient solver for space-time isogeometric Galerkin methods for parabolic problems  }
  \author[mat]{Gabriele Loli}\ead{gabriele.loli01@universitadipavia.it}
 \author[mat]{Monica Montardini\corref{cor1}} \ead{monica.montardini01@universitadipavia.it}
\author[mat,imati]{Giancarlo Sangalli} \ead{giancarlo.sangalli@unipv.it}
\author[imati]{Mattia Tani} \ead{mattia.tani@imati.cnr.it}

\address[mat]{Dipartimento di Matematica ``F. Casorati", Universit\`{a} di Pavia, Via A. Ferrata, 5, 27100 Pavia, Italy.}

\address[imati]{Istituto di Matematica Applicata e Tecnologie Informatiche, ``E. Magenes" del CNR, Via A. Ferrata, 1, 27100 Pavia, Italy.}
 \cortext[cor1]{Corresponding author}
\begin{abstract}
  In this work we focus on  the  preconditioning of a  Galerkin
  space-time isogeometric discretization of  the heat equation.  
     Exploiting the tensor
   product structure of the basis functions  in the parametric domain,  we propose a
   preconditioner that is the sum of Kronecker products of matrices and that can be efficiently applied thanks to an extension of the
   classical Fast Diagonalization method. 
The preconditioner is robust w.r.t. the polynomial degree   of the spline space   and the time
required for the application  is almost proportional to the number of
degrees-of-freedom, for a serial execution.  By incorporating some
information on the geometry parametrization and on  the equation coefficients, we
keep high efficiency with non-trivial domains and variable thermal conductivity and heat capacity coefficients.
\end{abstract}
\begin{keyword}
  Isogeometric Analysis \sep splines \sep heat equation \sep space-time
 Galerkin  formulation \sep Fast Diagonalization.
\end{keyword}
\end{frontmatter}



%

\section{Introduction}

{Isogeometric Analysis (IgA), introduced in the seminal
paper \cite{Hughes2005} (see also the book \cite{Cottrell2009}), is an evolution of the classical finite
element methods. IgA  uses spline  functions, or their
generalizations, both to  represent the computational domain and to approximate the solution of the partial
differential equation that models the problem of interest. This is
meant to simplify the interoperability between computer aided design
and numerical simulations.
IgA also benefits  from the approximation
properties of splines,  whose 
high-continuity   yields  higher accuracy when compared to  
$C^0$ piecewise polynomials, see e.g.,
\cite{Evans_Bazilevs_Babuska_Hughes,bressan2018approximation,Sangalli2018}.}

In this paper we focus on  the  heat equation and on its space-time
Galerkin isogeometric discretization.
Space-time finite element methods originated  in the papers
\cite{fried1969finite, bruch1974transient,oden1969general2},  
 where standard finite elements are ascribed an extra dimension for the time  and, typically,  adopt a  discontinuous approximation in time, since this produces a time
marching algorithm with a traditional step-by-step format (see
e.g. \cite{shakib1991new}).

 One of the first work concerning space-time isogeometric discretization is \cite{LANGER2016342}, in which a stabilized variational formulation produces a discrete bilinear form that is elliptic with respect to a discrete energy norm. The resulting linear system is then solved through a standard parallel AMG preconditioned GMRES solver.
Other papers in literature propose isogeometric space-time  Galerkin 
methods, favoring a  step-by-step structure in time. In
\cite{Langer2017}, the same variational formulation of
\cite{LANGER2016342} is used in combination with a space-time domain
decomposition into space-time slabs that are sequentially coupled  in
time by a  stabilized discontinuous Galerkin method.  In
  \cite{takizawa2014spacetime}, two different methods, called ST-C-SPT
  and ST-C-DCT, are outlined. The first one, analysed in
  \cite{UEDA2019266},  is a way to project a previously computed
  solution, possibly discontinuous, into isogeometric spaces in order
  to get a more regular solution and to save memory for its
  storage. In the ST-C-DCT method, the solution with continuous
  temporal representation is computed sequentially  from the
  space-time variational formulation associated with each slab.

Related multigrid solvers have   been proposed in
\cite{Gander2016,hofer2019parallel} and low-rank approximations in \cite{mantzaflaris2019low}. In \cite{bonilla2019maximum}  the
authors consider  $C^0$ coupling between the space-time slabs  with a
suitable  stabilized formulation  that also  yields a sequential scheme.    
 Finally, the interest in  space-time isogeometric
analysis for complex real-world simulations is attested by the recent papers
\cite{takizawa2017turbocharger,takizawa2016ram,takizawa2018heart},
where, again,  a sequential (discontinuous) approximation in time is adopted.  

The novelty of our  work is that we deal with   smooth approximation in both  space and time. 
This started in a  previous work,  \cite{Montardini2018space},  based
on  a $L^2$ least-squares formulation. The reason of this choice is that the problem becomes
elliptic and a preconditioner for the linear system can be easily designed as in  \cite{Sangalli2016}.
Indeed, when adopting smooth approximation in space and in time, the
major issue is its computational cost and the  
key ingredient  is an efficient solver for  the linear system, which is 
global in time.
In the present work, instead, we focus  on the plain  Galerkin space-time  formulation,
whose well-posedness has been studied, for finite element discretizations and  for the
heat equation, in the recent papers \cite{Steinbach2015} and 
\cite{Stevenson2019}. { For a Galerkin formulation, and assuming  
that the spatial domain does not change with time,  the linear system
has   the structure
\begin{equation}\label{eq:system-matrix}
	\gamma   \mathbf{W}_t\otimes  \mathbf{M}_s +   \nu \mathbf{M}_t\otimes  \mathbf{K}_s, 
\end{equation}
where $ \mathbf{W}_t$ is given by  the discretization of the
time derivative, $\mathbf{K}_s $ is given by the discretization of the
Laplacian in the spatial variables, $\mathbf{M}_t $ and $\mathbf{M}_s $
are ``mass matrices'' in time and space, respectively,  and $\gamma,\nu>0$ are constants of the problem.
Adopting an iterative solver, we do not need to form the matrix
\eqref{eq:system-matrix} (observe that the cost of formation of the matrices in
\eqref{eq:system-matrix} is comparable to the cost of  forming a
steady-state  diffusion matrix) but there is the need of an  efficient
preconditioning strategy. The main contribution of this paper is the
construction of  a
preconditioner for \eqref{eq:system-matrix} generalizing the classical
Fast Diagonalization (FD) method \cite{Lynch1964}. 
Indeed,  the  FD method cannot be directly applied to \eqref{eq:system-matrix}, as this would require to compute the eigendecomposition of the pencil $( \mathbf{W}_t,\mathbf{M}_t)$ which is numerically unstable. We circumvent this difficulty by introducing an ad-hoc factorization of the time matrices which allows to design a solver conceptually similar to the FD method. The computational cost of the setup of the resulting preconditioner is $O(N_{dof})$ FLOating-Point operations (FLOPs)  while its  application  is  $O(N_{dof}^{1 + 1/d})$ FLOPs, where $d$ is the number of spatial dimensions and  $N_{dof}$ denotes the total number of degrees-of-freedom  (assuming, for simplicity,  to have   the same number of  degrees-of-freedom
in time and in each spatial direction). Our numerical benchmarks show
that the computing time (serial and single-core
execution)  is close to optimality, that is, proportional to
$N_{dof}$. The preconditioner is also robust with
respect to the polynomial degree. Furthermore, our approach is 
optimal in terms of memory requirement: denoting  by
$N_s$  the total number of degrees-of-freedom in space,  the storage
cost is $O(p^d N_s + N_{dof})$. 
We also remark that global space-time methods in principle  facilitate the full 
parallelization of the solver,  see \cite{dorao2007parallel,Gander2015,kvarving2011fast}.

The outline of the paper is as follows. In Section 2 we present the basics of B-splines based IgA and the main properties of the Kronecker product operation. The model problem and its isogeometric discretization are introduced in Section 3, while in Section 4 we define the preconditioner and we discuss its application. We present the numerical results assessing the performance of the proposed preconditioner in Section 5. Finally, in the last section we draw some conclusions   and  we  highlight some future research directions.}

\section{Preliminaries}

\subsection{B-Splines}

Given $m$ and $p$ two positive integers, a 
 knot vector in $[0,1]$ is   a sequence of non-decreasing points
 $\Xi:=\left\{ 0=\xi_1 \leq \dots \leq \xi_{m+p+1}=1\right\}$. 
 We consider open knot vectors, i.e.  we set $\xi_1=\dots=\xi_{p+1}=0$
 and $\xi_{m}=\dots=\xi_{m+p+1}=1$. 
 Then, according to   Cox-De Boor recursion formulas (see  \cite{DeBoor2001}), univariate B-splines $\widehat{b}_{i,p}:(0,1)\rightarrow \mathbb{R}$ are  piecewise polynomials defined  for $i=1,\dots,m$ as \\
\indent for $p=0$:
\begin{align*} 
\widehat{b}_{i,0}(\eta) = \begin{cases}1 &  { \textrm{if }} \xi_{i}\leq \eta<\xi_{i+1},\\
0 & \textrm{otherwise,}
\end{cases}
\end{align*}
\indent for $p\geq 1$: 
\begin{align*}
 \widehat{b}_{i,p}(\eta)=\! \begin{cases}\dfrac{\eta-\xi_{i}}{\xi_{i+p}-\xi_{i}}\widehat{b}_{  i,p-1}(\eta)   +\dfrac{\xi_{i+p+1}-\eta}{\xi_{i+p+1}-\xi_{i+1}}\widehat{b}_{  i+1,p-1}(\eta)   & { \textrm{if }} \xi_{i}\leq  \eta<\xi_{i+p+1}, \\[8pt]
0  & \textrm{otherwise,}
\end{cases}
\end{align*}
 where we adopt the convention $0/0=0$. 
The univariate spline space is defined as
\begin{equation*}
\widehat{\S}_h^p : = \mathrm{span}\{\widehat{b}_{i,p}\}_{i = 1}^m,
\end{equation*}
where $h$ denotes the mesh-size, i.e. $ h:=\max\{ |\xi_{i+1}-\xi_i| \ | \ i=1,\dots,m+p \}$.
The interior knot multiplicity
  influences the smoothness of the B-splines at
the knots (see \cite{DeBoor2001}). For more details on B-splines properties  and their use in IgA we refer to  \cite{Cottrell2009}.

Multivariate B-splines are defined as tensor product of univariate
B-splines.  
We consider  functions that depend on $d$ spatial variables  and the time variable. Therefore, given positive integers $m_l, p_l$ for $l=1,\dots,d$ and $m_t,p_t$, we  introduce $d+1 $
univariate knot vectors $\Xi_l:=\left\{ \xi_{l,1} \leq \dots \leq
  \xi_{l,m_l+p_l+1}\right\}$  for $l=1,\ldots, d$  and   $\Xi_t:=\left\{
  \xi_{t,1} \leq \dots \leq \xi_{t,m_t+p_t+1}\right\}$.
  Let $h_l$ be the mesh-size associated to the knot vector $\Xi_l$ for $l=1,\dots,d$, let $h_s:=\max\{h_l\ | \ l=1,\dots, d\}$ be the maximal mesh-size in all spatial knot vectors and let $h_t$ be the mesh-size of the time knot vector. 
  Let also $\boldsymbol{p}$ be the vector that contains the degree indexes, i.e. $\boldsymbol{p} :=(\boldsymbol{p}_s,
p_t)$, where $\boldsymbol{p}_s:= 
 (p_1,\dots,p_d )$. For simplicity, we assume to have the same polynomial degree in all spatial directions, i.e., with abuse of notations, we set  $p_1=\dots=p_d=:p_s$, but the general case is similar.
 
We assume that the following quasi-uniformity of the knot vectors
holds.

\begin{ass}
\label{ass:quasi-uniformity}
 There exists  $0<\alpha\leq 1 $, independent
of $h_s$ and $h_t$, such that each non-empty knot span $( \xi_{l,i} ,
\xi_{l,i+1})$ of $\Xi_l$ fulfils $ \alpha h_s  \leq \xi_{l,i+1} - \xi_{l,i} \leq
h_s$  for $ l =1,\dots,  d$   and  each non-empty knot-span $( \xi_{t,i} ,
\xi_{t,i+1})$ of $\Xi_t$ fulfils $ \alpha h_t  \leq \xi_{t,i+1} - \xi_{t,i} \leq
h_t$.
\end{ass}

The multivariate B-splines are defined as
\begin{equation*} 
\widehat{B}_{ \vect{i},\vect{p}}(\vect{\eta},\tau) : =
\widehat{B}_{\vect{i_s}, \vect{p}_s}(\vect{\eta}) \widehat{b}_{i_t,p_t}(\tau),
\end{equation*}
 where 
 \begin{equation}
   \label{eq:tens-prod-space-parametric}
   \widehat{B}_{\vect{i_s},\vect{p}_s}(\vect{\eta}):=\widehat{b}_{i_1,p_s}(\eta_1) \ldots \widehat{b}_{i_d,p_s}(\eta_d),
 \end{equation}
  $\vect{i_s}:=(i_1,\dots,i_d)$, $\vect{i}:=(\vect{i_s}, i_t)$  and  $\vect{\eta} = (\eta_1, \ldots, \eta_d)$.  
The  corresponding spline space  is defined as
\begin{equation*}
\widehat{\S} ^{\vect{p}}_{ {h}  }  := \mathrm{span}\left\{\widehat{B}_{\vect{i}, \vect{p}} \ \middle| \ i_l = 1,\dots, m_l \text{ for } l=1,\dots,d; i_t=1,\dots,m_t \right\},
\end{equation*} 
  where $h:=\max\{h_s,h_t\}$. 
We have that
$\widehat{\S} ^{\vect{p}}_{ {h}}  =\widehat{\S} ^{ \vect{p}_s}_{ {h}_s} \otimes
\widehat{\S} ^{p_t}_{h_t}, $ where   \[\widehat{\S} ^{\vect{p}_s}_{h_s}
:= \mathrm{span}\left\{
  \widehat{B}_{\vect{i_s},\vect{p}_s} \ \middle|  \ i_l =
  1,\dots, m_l; l=1,\dots,d  \right\}\] is the space of
tensor-product splines on    $\widehat{\Omega}:=(0,1)^d$.  

\begin{ass} 
\label{ass:knot_mult}
We assume that $p_t, p_s\geq 1$ and that $\widehat{\S} ^{\vect{p}_s}_{h_s}
\subset C^0(\widehat{\Omega}  )$ and  $\widehat{\S} ^{{p}_t}_{h_t}
\subset C^0\left((0,1)\right)$ . 
\end{ass}
 
\subsection{Isogeometric spaces}
\label{sec:iso_space}
The space-time computational domain that we consider is $\Omega\times (0,T)$, where $\Omega\subset\mathbb{R}^d$ and $T>0$ is the final time. We make the following assumption.
 
\begin{ass}
\label{ass: regular-single-patch-domain}
 We assume  that  $\Omega$  is parametrized by  $\vect{F}: \widehat{\Omega} \rightarrow {\Omega}
$, with  $\vect{F}\in  {\left[\widehat{\mathcal{S}}^{\vect{p}_s}_{{h}_s}\right]^d} $. Moreover, we assume that
  $\vect{F}^{-1}$ has piecewise
  bounded derivatives of any order.
\end{ass}

 We define  $\vecx=(x_1,\dots,x_d):=  \vect{F}(\vect{\eta})$ and
 $t:=T\tau$. Then the space-time domain  is given by  the parametrization
 $\vect{G}:\widehat{\Omega}\times(0,1)\rightarrow
 \Omega\times(0,T)$, such that $ \vect{G}(\vect{\eta}, \tau):=(\vect{F}(\vect{\eta}), T\tau )=(\vecx,t).$
 
 We introduce the spline space with initial and boundary
 conditions, in parametric coordinates, as  
\begin{equation*}
\widehat{\mathcal{X}}_{h}:=\left\{ \widehat{v}_h\in \widehat{\mathcal{S}}^{\vect{p}}_h \ \middle| \ \widehat{v}_h = 0 \text{ on } \partial\widehat{\Omega}\times (0,1) \text{ and } \widehat{v}_h = 0 \text{ on } \widehat{\Omega}\times\{0\} \right\}.
\end{equation*}
  We also have that  $
\widehat{\mathcal{X}}_{h} =   \widehat{\mathcal{X}}_{s,h_s}     \otimes  \widehat{\mathcal{X}}_{t,h_t}  $,  where 
 \begin{subequations}
 \begin{align*}
  \widehat{\mathcal{X}}_{s,h_s}   & := \left\{ \widehat{w}_h\in \widehat{\mathcal{S}}^{\vect{p}_s}_{h_s}    \ \middle| \ \widehat{w}_h = 0 \text{ on } \partial\widehat{\Omega}  \right\}\  \\
  &  \ = \ \text{span}\left\{ \widehat{b}_{i_1,p_s}\dots\widehat{b}_{i_d,p_s} \ \middle| \ i_l = 2,\dots , m_l-1; \ l=1,\dots,d\ \right\},\\ 
   \widehat{\mathcal{X}}_{t,h_t} & := \left\{ \widehat{w}_h\in \widehat{\mathcal{S}}^{ p_t}_{h_t} \ \middle|  \ \widehat{w}_h( 0)=0 \right\}  \   = \ \text{span}\left\{ \widehat{b}_{i_t,p_t} \ \middle| \ i_t = 2,\dots , m_t\ \right\}.
 \end{align*}
 \end{subequations}
By introducing a colexicographical reordering of  the basis functions, we can  write  
\begin{align*}
  \widehat{\mathcal{X}}_{s,h_s} &   = \ \text{span}\left\{ \widehat{b}_{i_1,p_s}\dots\widehat{b}_{i_d,p_s} \ \middle| \ i_l = 1,\dots , n_{s,l}; \ l=1,\dots,d\ \right\}\\
  & \ =\text{span}\left\{ \widehat{B}_{i, \vect{p}_s} \ \middle|\ i =1,\dots , N_s   \ \right\},\\
\  \widehat{\mathcal{X}}_{t,h_t}  &  = \ \text{span}\left\{ \widehat{b}_{i,p_t} \ \middle| \ i = 1,\dots , n_t\ \right\},
\end{align*}
  and then
\begin{equation}
\widehat{\mathcal{X}}_{h}=\text{span}
\left\{ \widehat{B}_{{i}, \vect{p}} \ \middle|\ i=1,\dots,N_{dof} \right\},
\label{eq:all_basis}
\end{equation}
where we defined $n_{s,l}:= m_l-2 $ for $l=1,\dots,d$, $N_s:=\prod_{l=1}^dn_{s,l}$, $n_t:=m_t-1$ and $ N_{dof}:=N_s n_t$. 

Finally, the isogeometric space we consider is the isoparametric push-forward of \eqref{eq:all_basis} through the geometric map $\vect{G}$, i.e.
\begin{equation}
\mathcal{X}_{h} := \text{span}\left\{  B_{i, \vect{p}}:=\widehat{B}_{i, \vect{p}}\circ \vect{G}^{-1} \ \middle| \ i=1,\dots , N_{dof}   \right\}.
\label{eq:disc_space}
\end{equation}
We also have that   
 $\mathcal{X}_{h}=\mathcal{X}_{s,h_s}   \otimes \mathcal{X}_{t,h_t} $,
   where 
\[ 
 \mathcal{X}_{s,h_s}    :=\text{span}\left\{ {B}_{i, \vect{p}_s}:= \widehat{B}_{i, \vect{p}_s}\circ \vect{F}^{-1} \ \middle| \ i=1,\dots,N_s \right\}\]
 and \[ \mathcal{X}_{t,h_t}   :=\text{span}\left\{  {b}_{i,p_t}:= \widehat{b}_{i,p_t}( \cdot /T) \ \middle| \ i=1,\dots,n_t \right\}.
\]
 \subsection{Kronecker product}
 
The Kronecker product of two matrices $\mathbf{C}\in\mathbb{C}^{n_1\times n_2}$ and $\mathbf{D}\in\mathbb{C}^{n_3\times n_4}$ is defined as
\[
\mathbf{C}\otimes \mathbf{D}:=\begin{bmatrix}
[\mathbf{C}]_{1,1}\mathbf{D}  & \dots& [\mathbf{C}]_{1,n_2}\mathbf{D}\\
\vdots& \ddots &\vdots\\
[\mathbf{C}]_{n_1, 1}\mathbf{D}& \dots & [\mathbf{C}]_{n_1, n_2}\mathbf{D}
\end{bmatrix}\in \mathbb{C}^{n_1n_3\times n_2 n_4},
\]
where   $[\mathbf{C}]_{i,j}$ denotes the $ij$-th entry of the matrix $\mathbf{C}$. 
For extensions and   properties of the Kronecker product   we refer to \cite{Kolda2009}.
In particular, when   a  matrix   has a Kronecker product structure, the matrix-vector product   can be efficiently computed. For this
purpose, for $m=1,\dots,d+1$  we  introduce the $m$-mode product   of a tensor $\mathfrak{X}\in\mathbb{C}^{n_1\times\dots\times n_{d+1}}$ with a matrix $\mathbf{J}\in\mathbb{C}^{\ell\times n_m}$,   that we denote by  $\mathfrak{X}\times_m \mathbf{J}$.  This is 
  a tensor of size $n_1\times\dots\times n_{m-1}\times \ell \times n_{m+1}\times \dots n_{d+1}$,   whose elements are  defined as
\[\left[ \mathfrak{X}\times_m \mathbf{J} \right]_{i_1, \dots, i_{d+1}} := \sum_{j=1}^{n_m} [\mathfrak{X}]_{i_1,,\dots, i_{m-1},j,i_{m+1},\dots,i_{d+1}}[\mathbf{J}]_{i_m,j }.\]
Then, given $\mathbf{J}_i\in\mathbb{C}^{\ell_i\times n_i}$ for $i=1,\dots, d+1$, it holds
\begin{equation}
\label{eq:kron_vec_multi}
\left(\mathbf{J}_{d+1}\otimes\dots\otimes \mathbf{J}_1\right)\text{vec}\left(\mathfrak{X}\right)=\text{vec}\left(\mathfrak{X}\times_1 \mathbf{J}_1\times_2 \dots \times_{d+1}\mathbf{J}_{d+1} \right),
\end{equation} 
 where the vectorization operator ``vec" applied to a tensor stacks its entries  into a column vector as
 \[ [\text{vec}(\mathfrak{X})]_{j}=[\mathfrak{X}]_{i_1,\dots,i_{d+1}} \text{ for }   i_l=1,\dots,n_{l} \text{ and for } l=1,\dots,d+1,   \] 
where  $j:=i_1+\sum_{k=2}^{d+1}\left[(i_k-1)\Pi_{l=1}^{k-1}n_l\right]$.
 
\section{The model problem}
\label{sec:problem}

\subsection{Space-time variational formulation} 
Our model problem is the heat equation with homogeneous boundary and initial conditions:   we look for a solution $u$ such that 
\begin{equation}
\label{eq:problem}
	\left\{
	\begin{array}{rcllrcl}
		 \gamma\partial_t u -  \nabla\cdot(\nu \nabla u)  & = & f & \mbox{in } &\Omega \!\!\!\! &\times & \!\!\!\! (0, T),\\[1pt]
		 u  & = & 0 & \mbox{on } &\partial\Omega \!\!\!\! &\times& \!\!\!\! [0, T],\\[1pt]
		 u & = & 0 & \mbox{in } &\Omega\!\!\!\!  &\times &\!\!\!\! \lbrace 0 \rbrace,
	\end{array}
	\right.
\end{equation}
where $\Omega \subset \mathbb{R}^d$, $T$ is the final time,
$\gamma>0$ is the heat capacity constant and   $\nu > 0$ is the
thermal conductivity constant.   
We assume that  $f\in L^2( 0,T; H^{-1}(\Omega))$ and we introduce the Hilbert spaces
\[
 \mathcal{X}:=\left\{v\in L^2(0,T;H_0^1(\Omega))\cap
  H^1(0,T;H^{-1}(\Omega)) \mid v(\vecx,0)=0 \right\},\]
  \[  \mathcal{Y}:= L^2(0,T;H_0^1(\Omega)),
\]
endowed with the following norms  
\begin{equation*}
  \|v\|_{ \mathcal{X}}^2:= \frac{\gamma^2}{ {\nu}}  
  \|\partial_tv\|^2_{L^2(0,T;H^{-1}(\Omega))}+ \nu \|v\|^2_{L^2(0,T;H^1_0(\Omega))}  \ \text{ and }\ \|v\|_{ \mathcal{Y}}^2:= \nu  \|v\|^2_{L^2(0,T;H^1_0(\Omega))},
\end{equation*}
respectively.   Then, the variational formulation of \eqref{eq:problem} reads:  
\begin{equation}
\label{eq:var_for}
\text{Find } u \in \mathcal{X} \text{ such that }  \mathcal{A}( {u},v) =
\mathcal{F} (v)  \quad \, \forall v \in  \mathcal{Y},
\end{equation}  
where the bilinear form $\mathcal{A}(\cdot,\cdot)$ and the linear form $\mathcal{F}(\cdot)$ are defined $ \forall w\in\mathcal{X} \text{ and } \forall v\in \mathcal{Y}$ as
\[
 \mathcal{A}(w,v) := \int_{0}^T\int_{\Omega}  \left(  \gamma\partial_t {w} \,  v +  \nu\nabla w\cdot \nabla v \right)\,\d\Omega\,   \dt  
\quad \text{and} \quad
\mathcal{F}(v)  := \int_0^T\int_{\Omega}f\,   v \,\d\Omega\,\dt.
\] 
The well-posedness  of the variational formulation above is a
classical result, see for example \cite{Steinbach2015}.    
 
The previous setting can be generalized  to    non-homogeneous initial and boundary conditions.
For example, suppose that in \eqref{eq:problem} we have the initial condition $u=u_0$ in $\Omega\times\{0\}$ with  $u_0\in L^2(\Omega).$    Then, we consider a lifting  $\underline{u}_0$ of $u_0$ such that $\underline{u}_0\in L^2(0,T;H_0^1(\Omega))\cap
H^1(0,T;H^{-1}(\Omega))$, see e.g. \cite{Evans2010book}. Finally, we split the solution $u$ as $u=\underline{u}+\underline{u}_0$, where $\underline{u}\in\mathcal{X}$ is the solution of the following heat equation with homogeneous initial and boundary conditions:
 \begin{equation*} 
	\left\{
	\begin{array}{rcllrcl}
		 \gamma\partial_t \underline{u} -  \nabla\cdot(\nu \nabla \underline{u})  & = & \underline{f} & \mbox{in } &\Omega \!\!\!\! &\times & \!\!\!\! (0, T),\\[1pt]
		 \underline{u}  & = & 0 & \mbox{on } &\partial\Omega \!\!\!\! &\times& \!\!\!\! [0, T],\\[1pt]
		 \underline{u} & = & 0 & \mbox{in } &\Omega\!\!\!\!  &\times &\!\!\!\! \lbrace 0 \rbrace,
	\end{array}
	\right.
	 \end{equation*}
	 where $\underline{f}:=f-\gamma\partial_t \underline{u}_0 +  \nabla\cdot(\nu \nabla \underline{u}_0)$.

\subsection{Space-time Galerkin method}
Let  $\mathcal{X}_h\subset \mathcal{X}$ be  the isogeometric space
defined in  \eqref{eq:disc_space}.  We consider the following Galerkin method for \eqref{eq:var_for}: 
\begin{equation}
\label{eq:discrete-system}
 \text{Find }   u_h\in \mathcal{X}_h \text{ such that } \mathcal{A}(u_h, v_h) = \mathcal{F}(v_h) \quad \, \forall v_h\in  \mathcal{X}_h.
\end{equation} 
Following \cite{Steinbach2015}, let $N_h: L^2(0,T;H^{-1}(\Omega))\rightarrow \mathcal{X}_h$ be the discrete Newton potential operator, defined as follows: given $\phi\in  L^2(0,T;H^{-1}(\Omega))$ then $N_h\phi\in\mathcal{X}_h$ fulfills
\begin{equation*}
 \int_{0}^T\int_{\Omega}{ \nu}\nabla (N_h\phi)\cdot\nabla v_h \,\d\Omega\, \dt=\gamma\int_{0}^T\int_{\Omega}\phi\ v_h\,\d\Omega\, \dt\quad \forall v_h\in  \mathcal{X}_h.
\end{equation*}
Thus, we define the  norm in $\mathcal{X}_h$ as 
\begin{equation*}
 \|w\|^2_{\mathcal{X}_h}:=  \nu\|N_h(\partial_t w)\|^2_{L^2(0,T;H^1_0(\Omega))} + { \nu }\|w\|^2_{ L^2(0,T;H^1_0(\Omega))}.
\end{equation*}

The stability and the  well-posedness of    formulation \eqref{eq:discrete-system} are
guaranteed by  \cite[Equation (2.7)]{Steinbach2015} and by   a   straightforward   extension to IgA of  \cite[Theorem 3.1]{Steinbach2015} and \cite[Theorem 3.2]{Steinbach2015}. We summarize these results in the following Proposition \ref{prop:discrete_infsup} and Theorem \ref{thm:quasi-optimality}.

\begin{prop}
 \label{prop:discrete_infsup}
It holds
\begin{equation*} 
  \mathcal{A}(w ,v )\leq \sqrt{2} \|w \|_{\mathcal{X}}
\|v \|_{\mathcal{Y}} \quad \forall w  \in  \mathcal{X} \text{ and } \forall v \in  \mathcal{Y},
\end{equation*}
and
\begin{equation*} 
\|w_h\|_{ \mathcal{X}_h}\leq {  2\sqrt{2} }\sup_{ v_h\in \mathcal{X}_h}\frac{\mathcal{A}(w_h,v_h)}{\|v_h\|_{ \mathcal{Y}}} \quad \forall w_h\in  \mathcal{X}_h.
\end{equation*}
\end{prop}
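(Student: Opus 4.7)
The continuity bound has a direct proof that exploits the precise weighting of the $\mathcal{X}$ and $\mathcal{Y}$ norms by the physical constants $\gamma$ and $\nu$. I would split $\mathcal{A}(w,v)$ into its convective and diffusive contributions. The convective part $\gamma \int_0^T\!\!\int_\Omega (\partial_t w)\,v$ is controlled by the duality pairing between $L^2(0,T;H^{-1}(\Omega))$ and $L^2(0,T;H^1_0(\Omega))$, then rewritten as $\bigl(\gamma/\sqrt{\nu}\bigr)\|\partial_t w\|_{L^2(0,T;H^{-1}(\Omega))} \cdot \sqrt{\nu}\,\|v\|_{L^2(0,T;H^1_0(\Omega))}$. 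The diffusive part $\nu\int_0^T\!\!\int_\Omega \nabla w \cdot \nabla v$ is handled by Cauchy--Schwarz in space-time and expressed as $\sqrt{\nu}\|w\|_{L^2(0,T;H^1_0(\Omega))} \cdot \sqrt{\nu}\,\|v\|_{L^2(0,T;H^1_0(\Omega))}$. A final discrete Cauchy--Schwarz in $\mathbb{R}^2$ applied to the resulting two pairs collapses them into $\sqrt{2}\,\|w\|_{\mathcal{X}}\|v\|_{\mathcal{Y}}$, which is exactly the stated constant.

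For the discrete inf-sup bound I would adopt the classical test-function strategy: given $w_h \in \mathcal{X}_h$, set $v_h := w_h + N_h(\partial_t w_h)$, which lies in $\mathcal{X}_h$ by definition of the Newton potential and is well-defined since $\partial_t w_h \in L^2(0,T;L^2(\Omega))$ thanks to the $C^0$-in-time regularity guaranteed by Assumption \ref{ass:knot_mult}. I would then expand $\mathcal{A}(w_h,v_h) = \mathcal{A}(w_h,w_h) + \mathcal{A}(w_h,N_h(\partial_t w_h))$. Integration by parts in time combined with the homogeneous initial condition gives $\mathcal{A}(w_h,w_h) = (\gamma/2)\|w_h(\cdot,T)\|^2_{L^2(\Omega)} + \nu\|w_h\|^2_{L^2(0,T;H^1_0(\Omega))}$. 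For the cross term I would apply the defining identity of $N_h$ twice: first with test function $N_h(\partial_t w_h)$, which rewrites the convective piece as $\nu\|N_h(\partial_t w_h)\|^2_{L^2(0,T;H^1_0(\Omega))}$, and then with test function $w_h$, which rewrites the diffusive piece as $(\gamma/2)\|w_h(\cdot,T)\|^2_{L^2(\Omega)}$. Summing the two identities and discarding the non-negative terminal contributions produces the lower bound $\mathcal{A}(w_h,v_h) \geq \|w_h\|^2_{\mathcal{X}_h}$.

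To complete the proof I would estimate $\|v_h\|_{\mathcal{Y}}$ by the triangle inequality together with the elementary bound $(a+b)^2 \leq 2(a^2+b^2)$, yielding $\|v_h\|_{\mathcal{Y}} \leq \sqrt{2}\,\|w_h\|_{\mathcal{X}_h}$; dividing then forces the supremum of $\mathcal{A}(w_h,v_h)/\|v_h\|_{\mathcal{Y}}$ over $v_h \in \mathcal{X}_h$ to be bounded below by a fixed multiple of $\|w_h\|_{\mathcal{X}_h}$, giving the desired inequality. The explicit constant $2\sqrt{2}$ in the statement is the one quoted from \cite{Steinbach2015}, carried over via this identical algebraic argument.

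I do not expect any genuine obstacle in this proof: every step is algebraic once $N_h$ is in place, and the derivation is insensitive to the detailed structure of $\mathcal{X}_h$. The only point requiring verification when extending from the finite element setting of \cite{Steinbach2015} to the isogeometric setting is the chain of inclusions $\mathcal{X}_h \subset \mathcal{X}$ together with the well-posedness of $N_h$ on $\mathcal{X}_h$; these follow from Assumption \ref{ass:knot_mult}, Assumption \ref{ass: regular-single-patch-domain}, and the tensor-product decomposition $\mathcal{X}_h = \mathcal{X}_{s,h_s} \otimes \mathcal{X}_{t,h_t}$ constructed in Section \ref{sec:iso_space}, after which the discrete argument is verbatim the continuous one.
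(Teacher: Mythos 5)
Your proof is correct and follows essentially the same route as the paper's source: the paper does not prove Proposition \ref{prop:discrete_infsup} itself but cites \cite{Steinbach2015}, and your argument—weighted Cauchy--Schwarz for continuity, and the test function $v_h = w_h + N_h(\partial_t w_h)$ with the two applications of the Newton-potential identity and $\|v_h\|_{\mathcal{Y}} \leq \sqrt{2}\,\|w_h\|_{\mathcal{X}_h}$ for the inf-sup bound—is exactly the standard argument being invoked there, carried over verbatim to the IgA space $\mathcal{X}_h \subset \mathcal{X}$. Note only that your chain of estimates actually yields the sharper constant $\sqrt{2}$ in the second inequality, which of course implies the stated bound with $2\sqrt{2}$.
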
 

\begin{thm}\label{thm:quasi-optimality}
There exists a unique solution $u_h\in \mathcal{X}_h$ to the discrete
problem \eqref{eq:discrete-system}. Moreover, it holds 
\begin{equation*} 
\|u-u_h\|_{\mathcal{X}_h}\leq 5 \inf_{w_h\in \mathcal{X}_h}\|u-w_h \|_\mathcal{X},
\end{equation*}
where $u\in\mathcal{X}$ is the solution of \eqref{eq:var_for}. 
\end{thm}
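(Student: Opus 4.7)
The plan is the standard Céa/Strang-type argument adapted to the non-symmetric setting, where the only novelty is that the discrete norm $\|\cdot\|_{\mathcal{X}_h}$ differs from the continuous norm $\|\cdot\|_{\mathcal{X}}$.

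For existence and uniqueness, I would first observe that \eqref{eq:discrete-system} is a square linear system: trial and test spaces are both $\mathcal{X}_h$, of finite and equal dimension $N_{dof}$. The discrete inf-sup from Proposition~\ref{prop:discrete_infsup} implies that the only $u_h\in\mathcal{X}_h$ with $\mathcal{A}(u_h,v_h)=0$ for all $v_h\in\mathcal{X}_h$ is $u_h=0$ (since it forces $\|u_h\|_{\mathcal{X}_h}=0$, and this norm is easily seen to be a genuine norm on $\mathcal{X}_h$). Injectivity plus equal finite dimensions yields bijectivity, hence unique solvability.

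For the quasi-optimality bound, I would pick an arbitrary $w_h\in\mathcal{X}_h$ and write
\begin{equation*}
\|u-u_h\|_{\mathcal{X}_h}\leq \|u-w_h\|_{\mathcal{X}_h}+\|w_h-u_h\|_{\mathcal{X}_h}.
\end{equation*}
The second term is handled by the standard argument: by the discrete inf-sup, Galerkin orthogonality $\mathcal{A}(u-u_h,v_h)=0$ for all $v_h\in\mathcal{X}_h\subset\mathcal{Y}$, and the continuity estimate of Proposition~\ref{prop:discrete_infsup},
\begin{equation*}
\|w_h-u_h\|_{\mathcal{X}_h}\leq 2\sqrt{2}\sup_{v_h\in\mathcal{X}_h}\frac{\mathcal{A}(w_h-u_h,v_h)}{\|v_h\|_{\mathcal{Y}}}=2\sqrt{2}\sup_{v_h\in\mathcal{X}_h}\frac{\mathcal{A}(w_h-u,v_h)}{\|v_h\|_{\mathcal{Y}}}\leq 4\|u-w_h\|_{\mathcal{X}}.
\end{equation*}

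The step that deserves care, and that I expect to be the main obstacle, is bounding $\|u-w_h\|_{\mathcal{X}_h}$ by $\|u-w_h\|_{\mathcal{X}}$, since the discrete norm involves the discrete Newton potential $N_h$ defined only via test functions in $\mathcal{X}_h$. I would establish, for any $\phi\in L^2(0,T;H^{-1}(\Omega))$, the estimate $\sqrt{\nu}\|N_h\phi\|_{L^2(0,T;H^1_0(\Omega))}\leq \tfrac{\gamma}{\sqrt{\nu}}\|\phi\|_{L^2(0,T;H^{-1}(\Omega))}$. This follows by testing the defining equation of $N_h\phi$ against $v_h=N_h\phi$ itself and applying the $H^{-1}$/$H^1_0$ duality pairing on the right-hand side. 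Squaring yields $\nu\|N_h(\partial_t v)\|_{L^2 H^1_0}^2\leq \tfrac{\gamma^2}{\nu}\|\partial_t v\|_{L^2 H^{-1}}^2$ for any $v$ with $\partial_t v \in L^2(0,T;H^{-1}(\Omega))$, and hence $\|v\|_{\mathcal{X}_h}\leq \|v\|_{\mathcal{X}}$. Applied with $v=u-w_h$, this gives $\|u-w_h\|_{\mathcal{X}_h}\leq \|u-w_h\|_{\mathcal{X}}$.

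Combining the two bounds, $\|u-u_h\|_{\mathcal{X}_h}\leq (1+4)\|u-w_h\|_{\mathcal{X}}=5\|u-w_h\|_{\mathcal{X}}$, and taking the infimum over $w_h\in\mathcal{X}_h$ yields the stated estimate with constant $5$.
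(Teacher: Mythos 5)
Your proof is correct and reproduces exactly the argument of Steinbach (2015, Theorems~3.1--3.2), which is what the paper itself invokes without giving details: triangle inequality, the bound $\|u-w_h\|_{\mathcal{X}_h}\le\|u-w_h\|_{\mathcal{X}}$ obtained by testing the Newton-potential equation with $N_h\phi$ itself, Galerkin orthogonality, and the discrete inf-sup/continuity pair $2\sqrt2\cdot\sqrt2=4$, giving the stated constant $1+4=5$. Nothing further to add.
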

We have then the following a-priori estimate for  $h$-refinement. 

{
\begin{thm}
  Let $q$ be an integer such that   $1 < q \leq \min\{p_s,p_t
 \}+1$. If    $u\in \mathcal{X}\cap  H^q\left (
   \Omega\times (0,T) \right )$
  is the solution of \eqref{eq:var_for} and $u_h\in\mathcal{X}_h$ is the solution of \eqref{eq:discrete-system},
  then it holds
\begin{equation} 
  \label{eq:a-riori-error-bound}
 \|u-u_h\|_{\mathcal{X}_h}\leq C\sqrt{  \frac{\gamma^2}{\nu} +\nu}\left (h_t^{q-1}
   + h_s^{q-1}\right)   \| u \| _{ H^q\left ( 
   \Omega\times (0,T)\right ) } 
 \end{equation}
  where $C$ is independent of $h_s, h_t, \gamma $, $\nu$ and $u$.
\end{thm}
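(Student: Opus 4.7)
The plan is to combine the quasi-optimality result of Theorem~\ref{thm:quasi-optimality} with standard tensor-product spline approximation estimates. By Theorem~\ref{thm:quasi-optimality},
\[
\|u - u_h\|_{\mathcal{X}_h} \leq 5 \inf_{w_h \in \mathcal{X}_h} \|u - w_h\|_{\mathcal{X}},
\]
so it suffices to exhibit a single $w_h\in\mathcal{X}_h$ whose error in the $\mathcal{X}$-norm has the desired behavior. I would take $w_h:=\Pi_h u$, with $\Pi_h=\Pi_s\otimes\Pi_t$, where $\Pi_s\colon H^1_0(\Omega)\to\mathcal{X}_{s,h_s}$ and $\Pi_t\colon H^1_{\{0\}}(0,T)\to\mathcal{X}_{t,h_t}$ are the push-forwards through $\vect{F}$ and $\tau\mapsto T\tau$ of standard spline quasi-interpolants (e.g.\ those of \cite{bressan2018approximation}) that preserve homogeneous boundary/initial values. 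Under Assumptions~\ref{ass:quasi-uniformity}--\ref{ass:knot_mult} these operators enjoy the local estimates
\[
\|v-\Pi_s v\|_{H^k(\Omega)}\leq C\,h_s^{q-k}|v|_{H^q(\Omega)},\qquad \|w-\Pi_t w\|_{H^k(0,T)}\leq C\,h_t^{q-k}|w|_{H^q(0,T)},
\]
for $k=0,1$ and $1\leq q\leq\min\{p_s,p_t\}+1$; by Assumption~\ref{ass: regular-single-patch-domain} the constants depend only on $\vect{F}$ and on $\alpha$.

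Next I split the error according to the definition of the $\mathcal{X}$-norm,
\[
\|u-\Pi_h u\|_{\mathcal{X}}^2=\frac{\gamma^2}{\nu}\|\partial_t(u-\Pi_h u)\|^2_{L^2(0,T;H^{-1}(\Omega))}+\nu\|u-\Pi_h u\|^2_{L^2(0,T;H^1_0(\Omega))},
\]
and treat the two pieces separately using the tensor-product decomposition $I-\Pi_s\Pi_t=(I-\Pi_t)+(I-\Pi_s)\Pi_t$. For the spatial part, Fubini, $L^2$-stability of $\Pi_t$, the fact that $\Pi_s$ commutes with $\partial_t$ (and $\Pi_t$ with $\nabla_x$), and the estimates above yield
\[
\|u-\Pi_h u\|_{L^2(0,T;H^1_0(\Omega))}\leq C\,(h_s^{q-1}+h_t^{q-1})\,\|u\|_{H^q(\Omega\times(0,T))},
\]
after observing that all the mixed norms that appear (e.g.\ $\|u\|_{L^2(H^q)}$, $\|u\|_{H^{q-1}(H^1)}$) involve derivatives of total order at most $q$ and are therefore dominated by $\|u\|_{H^q(\Omega\times(0,T))}$. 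For the temporal part I would first bound the $H^{-1}(\Omega)$ norm by the $L^2(\Omega)$ norm via the Poincaré duality $\|\varphi\|_{H^{-1}(\Omega)}\leq C_\Omega\|\varphi\|_{L^2(\Omega)}$, and then apply the same decomposition to $\partial_t(u-\Pi_h u)=(I-\Pi_t)\partial_t u+(I-\Pi_s)\partial_t\Pi_t u$, exploiting stability of $\Pi_t$ in $H^1(0,T)$ for the second addend. This produces a bound of the same form,
\[
\|\partial_t(u-\Pi_h u)\|_{L^2(0,T;H^{-1}(\Omega))}\leq C\,(h_s^{q-1}+h_t^{q-1})\,\|u\|_{H^q(\Omega\times(0,T))}.
\]
Summing the two contributions and taking the square root yields exactly the prefactor $\sqrt{\gamma^2/\nu+\nu}$ claimed in \eqref{eq:a-riori-error-bound}.

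The main obstacle I anticipate is not algebraic but one of careful bookkeeping: making sure the chosen quasi-interpolant sits inside $\mathcal{X}_h$ (i.e.\ preserves the homogeneous Dirichlet-in-space and zero-initial-in-time conditions), verifying that the constants arising from the geometry push-forward are independent of $h_s,h_t,\gamma,\nu$ (they depend on $\|\vect{F}\|_{W^{q,\infty}}$ and $\|\vect{F}^{-1}\|_{W^{q,\infty}}$, which is covered by Assumption~\ref{ass: regular-single-patch-domain}), and keeping track of the fact that the mixed regularity $u\in H^q(\Omega\times(0,T))$ is exactly what is needed—no more, no less—to control every intermediate anisotropic Sobolev norm that the tensor-product splitting produces.
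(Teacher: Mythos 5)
Your argument follows essentially the same route as the paper's own proof: invoke the quasi-optimality estimate of Theorem~\ref{thm:quasi-optimality}, choose $w_h=\Pi_h u$ with a tensor-product quasi-interpolant $\Pi_h=\Pi_{s,h_s}\otimes\Pi_{t,h_t}$ preserving the essential conditions, bound $\|\cdot\|_{L^2(0,T;H^{-1}(\Omega))}$ by $\|\cdot\|_{L^2(\Omega)\otimes L^2(0,T)}$, and then apply anisotropic tensor-product approximation estimates to control each piece of the $\mathcal{X}$-norm. The only cosmetic difference is that you re-derive the anisotropic bounds by hand via the telescoping identity $I-\Pi_s\Pi_t=(I-\Pi_t)+\Pi_t(I-\Pi_s)$ together with commutativity and stability of the univariate projectors, whereas the paper simply cites the anisotropic estimates of Beirão da Veiga et al.\ as \eqref{eq:thm2-1}--\eqref{eq:thm2-2}; the underlying lemma and the final bookkeeping yielding the prefactor $\sqrt{\gamma^2/\nu+\nu}$ are identical.
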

\begin{proof}
  We  use the
  approximation  estimates of the isogeometric spaces from \cite{Da2012}. We
  report here only the main steps, since the proof is similar to
  the one of \cite[Proposition 4]{Montardini2018space}.

   Let  $ \Pi_h :  L^2\left (  \Omega\times (0,T) \right )
\rightarrow \mathcal{X}_h$ be  a suitable projection, based on a tensor-product construction as in
 \cite{Da2012}, and $L^2\left ( \Omega\times (0,T)\right )  \equiv L^2(0,T;L^2(\Omega))  \equiv L^2(\Omega)  \otimes   L^2(0,T )
$. Then $ \Pi_h   =    \Pi_{s,h_s} \otimes \Pi_{t,h_t}$,
 where $\Pi_{s,h_s} :  L^2(\Omega)  \rightarrow \mathcal{X}_{s,h_s}$  and
 $\Pi_{t,h_t} :  L^2(0,T)  \rightarrow \mathcal{X}_{t,h_t}$ are
 projections on the space and time  components, respectively, of the isogeometric
 space  $\mathcal{X}_h$. The following bounds are
 straightforward  generalizations of \cite[Proposition 4.1, Theorem 5.1]{Da2012}
 \begin{equation}\label{eq:thm2-1}
   \begin{aligned}
       \|\partial_t (u-\Pi_h u)  \|_{ L^2(\Omega)
 \otimes L^2(0,T )    } =  &  |  (u-\Pi_h u)  |_{ L^2(\Omega)
 \otimes  H^1(0,T )  } \\   \leq & C_1\left (h_t^{q-1}
     \| u \| _{   L^2(\Omega) \otimes H^q(0,T) } + h_s^{q-1}
     \| u \| _{   H^{q-1}(\Omega)\otimes H^1(0,T)} \right)
   \\   \leq & C_1\left (h_t^{q-1}
   + h_s^{q-1}\right)   \| u \| _{ H^q\left( \Omega \times (0,T) 
   \right) } 
   \end{aligned}
 \end{equation}
 and 
  \begin{equation}\label{eq:thm2-2}
    \begin{aligned}
        \|u-\Pi_h u \|_{ L^2(0,T;H_0^1(\Omega))} = &     |u-\Pi_h u
        |_{H^{1}(\Omega)  \otimes L^2(0,T) }\\ \leq &  C_2 \left (h_t^{q-1}
     \| u \| _{  H^1(\Omega) \otimes H^{q-1}(0,T) } + h_s^{q-1}
     \| u \| _{ H^{q}(\Omega) \otimes  L^2(0,T) } \right)
   \\   \leq & C_2\left (h_t^{q-1}
   + h_s^{q-1}\right)   \| u \| _{ H^q\left (\Omega \times (0,T) 
   \right ) } 
    \end{aligned}
   \end{equation}
 Therefore, using \eqref{eq:thm2-1} with the obvious bound 
 $ \|\partial_t (u-\Pi_h u)  \|_{L^2(0,T;H^{-1}(\Omega))}  \leq
     \|\partial_t (u-\Pi_h u)  \|_{ L^2(\Omega)
 \otimes L^2(0,T )}$, and
 \eqref{eq:thm2-2},  we get   
 \begin{equation}\label{eq:thm2-3}
  \|u-\Pi_h u \|^2_\mathcal{X}\leq C_3\left( \frac{\gamma^2}{\nu} +\nu\right)\left (h_t^{q-1}
   + h_s^{q-1}\right)^2   \| u \| ^2_{ H^q\left (\Omega\times (0,T) 
   \right ) } 
\end{equation}
and then \eqref{eq:a-riori-error-bound},  thanks to Theorem
\ref{thm:quasi-optimality}.
The constants   $C_1, C_2, C_3$  above are  independent of $h_s, h_t, \gamma, \nu$ and $u$. 
\end{proof}
 
\begin{rmk}
\label{rem:on-the-error-bound}
   In Theorem \ref{thm:quasi-optimality}, the degrees $p_t$,  $p_s$
and the mesh-sizes $h_t$,  $h_s$ play a similar role. This motivates
our   choice $p_t=p_s=:p$  and $h_t= h_s=:h$ for the numerical
tests in Section \ref{sec:numerical-tests}. In this case, and if the
solution $u$ is smooth, \eqref{eq:a-riori-error-bound} yields
$h$-convergence of order $p$.  A sharper error analysis is possible
taking into account a different regularity of the solution $u$ in
space and time, in the line of the anisotropic estimates of
\cite{Da2012}.
 \end{rmk}
  }
\subsection{Discrete system}
The linear system associated to \eqref{eq:discrete-system} is
\begin{equation}
\mathbf{A}\mathbf{u} = \mathbf{f},
\label{eq:sys_solve}
\end{equation}
where  
$[\mathbf{A}]_{i,j}=\mathcal{A}( B_{j, \vect{p}}, B_{i,
  \vect{p}})$ and $[\mathbf{f}]_{i}=\mathcal{F}( B_{i,
  \vect{p}})$. The tensor-product structure of the isogeometric
space \eqref{eq:disc_space} allows to write  the system  matrix $\mathbf{A}$ as sum of Kronecker products of matrices as 
\begin{equation} 
\mathbf{A}  \   =   \gamma   \mathbf{W}_t \otimes  \mathbf{M}_s + \nu  \mathbf{M}_t\otimes  \mathbf{K}_s, \label{eq:syst_mat} 
\end{equation}
where  for $i,j=1,\dots,n_t$
\begin{subequations}
\begin{equation}
\label{eq:time_mat}
 [ \mathbf{W}_t]_{i,j}  = \int_{0}^T   b'_{j,  {p}_t}(t)\,  b_{i,
   {p}_t}(t) \, \dt \quad \text{and} \quad   [ \mathbf{M}_t]_{i,j} = \int_{0}^T\,  b_{i, p_t}(t)\,  b_{j, p_t}(t)  \, \dt , 
  \end{equation}
  while for $  i,j=1,\dots,N_s $
  \begin{equation}
  \label{eq:space_mat}
 [ \mathbf{K}_s]_{i,j}  =  \int_{\Omega} \nabla  B_{i, p_s}(\vect{x})\cdot \nabla  B_{j, p_s}(\vect{x}) \ \d\Omega  \quad \text{and} \quad  [ \mathbf{M}_s]_{i,j}  =  \int_{\Omega}  B_{i, p_s}(\vect{x}) \  B_{j, p_s}(\vect{x}) \ \d\Omega .  
 \end{equation}
 \label{eq:pencils}
\end{subequations}

\section{Preconditioner definition and application}
 
We introduce, for the system \eqref{eq:sys_solve},   the  preconditioner  
\begin{equation*} 
 [\widehat{\mathbf{A}}]_{i,j}:=\widehat{\mathcal{A}}(\widehat{B}_{j, \vect{p}},\widehat{B}_{i, \vect{p}}),
 \end{equation*}
where 
\[ \widehat{\mathcal{A}}(\widehat{v},\widehat{w}) :=
 \int_{0}^1 \int_{\widehat{\Omega}}  \left(    \gamma \partial_t \widehat{v} \,  \widehat{w} + T \nu  \nabla \widehat{v}\cdot\, \nabla \widehat{w}   \right)\,\d\widehat{\Omega}\, \d \tau \quad   \forall\widehat{v},\widehat{w}\in \widehat{\mathcal{X}}_h.\]
 We have  
\begin{equation}
\label{eq:prec_definition}
\widehat{\mathbf{A}}=\gamma {\mathbf{W}}_t\otimes\widehat{\mathbf{M}}_s+\nu{\mathbf{M}}_t\otimes\widehat{\mathbf{K}}_s,
\end{equation}
where $\widehat{\mathbf{K}}_s$ and $\widehat{\mathbf{M}}_s$ are the
equivalent of \eqref{eq:space_mat} in the parametric domain, i.e. we define for $i,j=1,\dots,N_s$ 
  \begin{equation}
  \label{eq:space_mat_par}
 [ \widehat{\mathbf{K}}_s]_{i,j}  =  \int_{\widehat{\Omega}}\nabla  {\widehat{B}}_{i, p_s}(\vect{\eta})\cdot \nabla  {\widehat{B}}_{j, p_s}(\vect{\eta}) \ \d\widehat{\Omega} \quad \text{and} \quad [ \widehat{\mathbf{M}}_s]_{i,j}  =  \int_{\widehat{\Omega}}  \widehat{B}_{i, p_s}(\vect{\eta}) \  \widehat{B}_{j, p_s}(\vect{\eta}) \ \d\widehat{\Omega}.  
 \end{equation}
We emphasize that the time matrices appearing in \eqref{eq:prec_definition} are the same ones appearing in the system matrix \eqref{eq:syst_mat}. This is because for $i,j=1,\dots,n_t$ we have
\[ [{\mathbf{W}}_t]_{i,j} = \int_{0}^T b'_{j,  {p}_t}(t) \ b_{i,  {p}_t}(t) \
\dt = \int_{0}^1 \widehat b'_{j,  {p}_t}(\tau)\  \widehat b_{i,  {p}_t}(\tau) \
\d\tau  \]
and
\[ [{\mathbf{M}}_t]_{i,j} = \int_{0}^T b_{j,  {p}_t}(t) \ b_{i,  {p}_t}(t) \
\dt = T \int_{0}^1 \widehat b_{j,  {p}_t}(\tau)\  \widehat b_{i,  {p}_t}(\tau) \
\d\tau. \]

Thanks to \eqref{eq:tens-prod-space-parametric}, the spatial  matrices \eqref{eq:space_mat_par}  have  the following
structure
\begin{equation}
\label{eq:space_univ_par_mat}
\widehat{\mathbf{K}}_s=\sum_{l=1}^d\widehat{\mathbf{M}}_d\otimes\dots\otimes\widehat{\mathbf{M}}_{l+1}\otimes\widehat{\mathbf{K}}_l\otimes\widehat{\mathbf{M}}_{l-1}\otimes\dots\otimes\widehat{\mathbf{M}}_1 \quad \text{and} \quad \widehat{\mathbf{M}}_s=\widehat{\mathbf{M}}_d\otimes\dots\otimes\widehat{\mathbf{M}}_1,
\end{equation}
where for $l=1,\dots,d$ and for $ i,j=1,\dots,n_{s,l}$ we define
\[
[\widehat{\mathbf{K}}_l]_{i,j}:=\int_0^1\widehat{b}'_{i,p_s}(\eta_k)\widehat{b}'_{j,p_s}(\eta_k) \d\eta_k \quad \text{and} \quad [\widehat{\mathbf{M}}_l]_{i,j}:=\int_0^1\widehat{b}_{i,p_s}(\eta_k)\widehat{b}_{j,p_s}(\eta_k) \d\eta_k.
\]

The efficient application of the proposed preconditioner, that is, the
solution of a linear system with matrix $ \widehat{\mathbf{A}}$, should
exploit  the structure highlighted above.   When the
pencils $ ( \mathbf{W}_t,\mathbf{M}_t )$,  $ ( \widehat{\mathbf{K}}_1,\widehat{\mathbf{M}}_1 ),
\ldots,  ( \widehat{\mathbf{K}}_d,\widehat{\mathbf{M}}_d  )$ 
admit a stable generalized eigendecomposition,   a possible approach is
the Fast Diagonalization (FD) method, see \cite{Deville2002} and
\cite{Lynch1964}  for details.
We will see in Section \ref{sec:stable_space} that the  spatial pencils $ ( \widehat{\mathbf{K}}_1,\widehat{\mathbf{M}}_1  ),
\ldots,  ( \widehat{\mathbf{K}}_d,\widehat{\mathbf{M}}_d  )$ admit a stable diagonalization, but this is not the case of $ ( \mathbf{W}_t,\mathbf{M}_t )$, that needs a
  special treatment  as explained in  Section \ref{sec:stable_time}.

\subsection{Stable factorization of the pencils $ (\widehat{\mathbf{K}}_l, \widehat{\mathbf{M}}_l)$ for $  l=1,\dots,d$}
\label{sec:stable_space}

The spatial stiffness and mass matrices $\widehat{\mathbf{K}}_l$ and $\widehat{\mathbf{M}}_l$ are symmetric and positive definite for  $l=1,\dots,d$. Thus, the pencils 
  $ (\widehat{\mathbf{K}}_l, \widehat{\mathbf{M}}_l )$ for  $l=1,\dots,d$ admit 
    the  generalized eigendecomposition  \[
    \widehat{\mathbf{K}}_l\mathbf{U}_l =
\widehat{\mathbf{M}}_l\mathbf{U}_l\mathbf{\Lambda}_l,
\] where the matrices $
\mathbf{U}_l$ contain  in each column the
$\widehat{\mathbf{M}}_l$-orthonormal  generalized eigenvectors and 
  $\mathbf{\Lambda}_l$ are diagonal matrices whose entries contain the generalized eigenvalues.
Therefore we have for $l=1,\dots ,d$ the  factorizations 
\begin{equation}
\label{eq:space_eig}
\mathbf{U}^T_l \widehat{\mathbf{K}}_l \mathbf{U}_l=
\mathbf{\Lambda}_l  \quad\text{ and  }\quad \mathbf{U}^T_l \widehat{\mathbf{M}}_l \mathbf{U}_l=
\mathbb{I}_{  n_{s,l}},
\end{equation}
 where $\mathbb{I}_{n_{s,l}}$ denotes the identity matrix of dimension $n_{s,l} \times
n_{s,l}$. 
The stability of the decomposition \eqref{eq:space_eig} is
expressed by the condition number of the eigenvector matrix.  In
particular  $\mathbf{U}^T_l \widehat{\mathbf{M}}_l \mathbf{U}_l=
\mathbb{I}_{  n_{s,l}}$ implies  that
 \[ \kappa_2
(\mathbf{U}_l) := \|\mathbf{U}_l\|_{2 }
  \|\mathbf{U}_l^{-1}\|_{2 }  =
  \sqrt{\kappa_2(\widehat{\mathbf{M}}_l)},\]  where  $   \|\cdot\|_{ 2
  }$ is the norm induced by the Euclidean vector norm. 
  The condition number $\kappa_2(\widehat{\mathbf{M}}_l)$ has been
  studied in \cite{gahalaut2014condition} and it does not depend on
    the mesh-size,  but it depends on the polynomial degree.
 Indeed,  we report  in Table
  \ref{tab:cond_number_space} the   behavior of $\kappa_2
(\mathbf{U}_l)$   for different values of  spline degree   $p_s$ and for different uniform discretizations  with  number of elements denoted by $n_{el}$.      We observe  that $\kappa_2
(\mathbf{U}_l)$  exhibits a   dependence only on  
$p_s$, but stays moderately low for all low polynomial degrees that
are in the range of interest. 
{\renewcommand\arraystretch{1.2} 
\begin{table}[H]                                                        
	\centering                                                              
	\begin{tabular}{|c|c|c|c|c|c|c|c|}                                      
		\hline                                                                  
		 $n_{el}$ & $p_s=2$ & $p_s=3$ & $p_s=4$ & $p_s=5$ & $p_s=6$ & $p_s=7$ & $p_s=8$ \\                                              
		\hline                                                                  
		\z\z32 & $2.7 \cdot 10^0 $ & $ 4.5 \cdot 10^0$ & $ 7.6 \cdot 10^0$ & $ 1.3 \cdot 10^1$ & $ 2.1  \cdot 10^1$ & $ 3.5  \cdot 10^1$ & $ 5.7  \cdot 10^1$ \\   
		\hline                                                          
		\z\z64  & $ 2.7 \cdot 10^0 $ & $ 4.5 \cdot 10^0$ & $ 7.6 \cdot 10^0$ & $ 1.3  \cdot 10^1$ & $ 2.1  \cdot 10^1$ & $ 3.5 \cdot 10^1$ & $ 5.7  \cdot 10^1$ \\   
		\hline                                                          
		\z128 & $ 2.7 \cdot 10^0$ & $ 4.5 \cdot 10^0$ & $ 7.6 \cdot 10^0$ & $ 1.3 \cdot 10^1 $ & $ 2.1  \cdot 10^1$ & $ 3.5  \cdot 10^1$ & $ 5.7 \cdot 10^1$ \\   
		\hline                                                          
		\z256  & $ 2.7 \cdot 10^0$ & $ 4.5 \cdot 10^0$ & $ 7.6 \cdot 10^0$ & $ 1.3  \cdot 10^1$ & $ 2.1  \cdot 10^1$ & $ 3.5  \cdot 10^1$ & $ 5.7  \cdot 10^1$ \\  
		\hline                                                          
		\z512  & $ 2.7 \cdot 10^0$ & $ 4.5 \cdot 10^0$ & $ 7.6 \cdot 10^0$ & $ 1.3  \cdot 10^1$ & $ 2.1  \cdot 10^1$ & $ 3.5  \cdot 10^1$ & $ 5.7 \cdot 10^1$ \\   
		\hline                                                          
		1024  & $ 2.7 \cdot 10^0$ & $ 4.5 \cdot 10^0$ & $ 7.6 \cdot 10^0$ & $ 1.3  \cdot 10^1$ & $ 2.1  \cdot 10^1$ & $ 3.5  \cdot 10^1$ & $ 5.7 \cdot 10^1 $ \\ 
		\hline                                                                  
	\end{tabular}                                                           
\caption{$\kappa_2 (\mathbf{U}_l)$ for different polynomial degrees $p_s$ and number of elements $n_{el}$.}                                                                               
\label{tab:cond_number_space}               
\end{table}}

\subsection{Stable factorization of the pencil $ (\mathbf{W}_t, \mathbf{M}_t )$}
\label{sec:stable_time}
 \subsubsection{Numerical instability of the eigendecomposition} 

While $\mathbf{M}_t$ is symmetric,  $\mathbf{W}_t$ is neither symmetric nor skew-symmetric. 
Indeed
\begin{equation}
\label{eq:integral_W}
\begin{aligned}[t]
  [\mathbf{W}_t]_{i,j}  +[\mathbf{W}_t]_{j,i}   =   \!
 \! \int_{0}^T   b'_{j,  {p}_t}(t)\     b_{i,  {p}_t}(t) \
\dt + \! \!  \int_{0}^T  b'_{i,  {p}_t}(t)\    b_{j,
  {p}_t}(t) \ \dt    =        b_{i,  {p}_t}(T)   \   b_{j,  {p}_t}(T)
\end{aligned}
\end{equation} 
where $   b_{i,  {p}_t}(T) \  b_{j,
  {p}_t}(T)$ vanishes for all  $ i=1,\dots, n_t-1$ or  $ j=1,\dots, n_t-1$. 
A numerical computation of  the   generalized eigendecomposition of the pencil $ (
  \mathbf{W}_t,\mathbf{M}_t )$, that is 
\begin{equation}\label{eq:diag}
\mathbf{W}_t \mathbf{U} = \mathbf{M}_t \mathbf{U} \mathbf{\Lambda}_t,
\end{equation}
where $\mathbf{\Lambda}_t$  is  the diagonal matrix of the generalized complex
eigenvalues and ${\mathbf{U}} $ is the  complex matrix whose columns are
the generalized eigenvectors normalized w.r.t. the norm induced by 
 ${\mathbf{M}_t}$,   reveals that the
eigenvectors are far from $\mathbf{M}_t$-orthogonality, i.e. the matrix    ${\mathbf{U}}^*\mathbf{M}_t 
{\mathbf{U}}$ is not diagonal.  
  We set $T=1$ and we report in Table \ref{tab:cond_number} 
the  condition number 
  $ \kappa_2 (\mathbf{U})$ 
   for different values of spline degree $p_t$ and for different  uniform discretizations with   $n_{el}$ number of elements.   In contrast to the spatial case (see  Section \ref{sec:stable_space}),   $ \kappa_2 (\mathbf{U})$
is large and grows
exponentially with respect to the spline degree $p_t$ and the level of mesh
refinement.  
 This test  clearly indicates a
numerical instability when computing the generalized eigendecomposition of $(\mathbf{W}_t, \mathbf{M}_t)$.
A similar behavior has  also been highlighted in \cite{hofer2019parallel}.  
{\renewcommand\arraystretch{1.2}  
\begin{table}[H]                        \centering                             
{ \begin{tabular}{|c|c|c|c|c|c|c|c|}      \hline                                  $n_{el}$ & $p_t=2$ & $p_t=3$ & $p_t=4$ & $p_t=5$ & $p_t=6$ & $p_t=7$ & $p_t=8$ \\                                                           
\hline                                  \z\z32 & $8.9 \cdot 10^2 $ & $ 3.0 \cdot10^4 $ & $ 5.0 \cdot10^{4\z} $ & $ 3.4 \cdot10^{5\z} $ & $ 3.1 \cdot10^{6\z} $ & $ 4.2 \cdot10^{7\z} $ & $ 7.0 \cdot10^{8\z}$ \\     
\hline                                                                                    
\z\z64  & $ 4.4 \cdot 10^3$ & $ 2.6 \cdot 10^5 $ & $ 5.0 \cdot 10^{5\z} $ & $ 5.4 \cdot10^{6\z} $ & $ 8.9 \cdot10^{7\z} $ & $ 3.1 \cdot10^{9\z} $ & $ 2.0 \cdot10^{10}$ \\      
\hline                                                                                    
\z128 & $ 2.3 \cdot 10^4 $ & $ 1.2 \cdot 10^6 $ & $ 5.8 \cdot 10^{6\z} $ & $ 1.0 \cdot 10^{8\z} $ & $ 3.0 \cdot 10^{9\z} $ & $ 6.4 \cdot 10^{11} $ & $ 1.3 \cdot 10^{12}$ \\ 
\hline                                                                                    
\z256  & $ 1.2 \cdot 10^5 $ & $ 9.4 \cdot10^6 $ & $ 7.6 \cdot 10^{7\z} $ & $ 2.1 \cdot 10^{9\z} $ & $ 1.2 \cdot 10^{11} $ & $ 1.2 \cdot 10^{13} $ & $ 2.1 \cdot 10^{13}$ \\
\hline                                                                                    
\z512  & $ 7.0 \cdot 10^5 $ & $ 8.3 \cdot10^7 $ & $ 1.1 \cdot 10^{9\z} $ & $ 4.9 \cdot 10^{10} $ & $ 4.5 \cdot 10^{12} $ & $ 3.6 \cdot 10^{13} $ & $ 4.9 \cdot 10^{12}$ \\
\hline                                                                                    
1024  & $ 4.1 \cdot 10^6 $ & $ 8.0 \cdot 10^8 $ & $ 1.9 \cdot 10^{10} $ & $ 1.3 \cdot 10^{12} $ & $ 9.6 \cdot 10^{12} $ & $ 1.4 \cdot 10^{12 }$ & $ 5.6 \cdot 10^{12}$ \\
\hline                                  \end{tabular}                          } \caption{
$\kappa_2 (\mathbf{U})$ for different degree $p_t$ and number of elements $n_{el}$.}                                                                               
\label{tab:cond_number}            \end{table}}

 \subsubsection{Construction of the stable factorization} 
The analysis above  motivates the search of a different but  stable factorization of the pencil
  $(\mathbf{W}_t,  \mathbf{M}_t)$.  We  look now for a factorization of the form 
\begin{equation}\label{eq:arrow}
\mathbf{W}_t \mathbf{U}_t = \mathbf{M}_t \mathbf{U}_t
\mathbf{\Delta}_t,
\end{equation}
where $ \mathbf{\Delta}_t$ is a  complex arrowhead matrix, i.e. with   non-zero entries 
allowed on  the diagonal, on the last row and on the last column only.
We also require that  $\mathbf{U}_t $ 
 fulfils the orthogonality condition
\begin{equation}
  \label{eq:Mt-orthogonality}
\mathbf{U}_t^*\mathbf{M}_t {\mathbf{U}_t} = \mathbb{I}_{n_t}.
\end{equation} 
From \eqref{eq:arrow}--\eqref{eq:Mt-orthogonality} we then  obtain the factorizations
\begin{equation}
\label{eq:time_eig}
  \mathbf{U}_t^*  \mathbf{W}_t \mathbf{U}_t =
\mathbf{\Delta}_t  \quad \text{ and  }\quad \mathbf{U}_t^*  \mathbf{M}_t \mathbf{U}_t =
\mathbb{I}_{  n_t}. \end{equation}
With this aim, we look for $\mathbf{U}_t$ as follows:
\begin{equation}
 \label{eq:U_t}
 \mathbf{U}_t :=\begin{bmatrix}
\overset{\circ}{\mathbf{U}}_t & \mathbf{r}\\[2pt]
 \boldsymbol{0}^T & \rho
\end{bmatrix}
 \end{equation}
where $\overset{\circ}{\mathbf{U}}_t  \in\mathbb{C}^{(n_t-1)\times
  (n_t-1)}$, $\mathbf{r}\in\mathbb{C}^{n_t-1}$, $\rho\in \mathbb{C} $
and where $\vect{0}\in\mathbb{R}^{n_t-1}$ denotes the null vector. In
order to guarantee the non-singularity of $\mathbf{U}_t$, we further
impose $\rho\neq 0$. Accordingly,  we split the time matrices $\mathbf{W}_t$ and $\mathbf{M}_t$ as
 \begin{equation}\label{eq:split}
\mathbf{W}_t = \begin{bmatrix}
\overset{\circ}{\mathbf{W}}_t & \mathbf{w}\\[2pt]
-\mathbf{w}^T & \omega
\end{bmatrix}\quad\text{and}\quad
\mathbf{M}_t=\begin{bmatrix}
\overset{\circ}{\mathbf{M}}_t & \mathbf{m}\\[2pt]
\mathbf{m}^T & \mu
\end{bmatrix}, 
\end{equation}
where we have defined   
\[\omega:= [\mathbf{W}_t]_{n_t,n_t}, \qquad \mu:= [\mathbf{M}_t]_{n_t,n_t},
\]\[ 
[\mathbf{w}]_i= [\mathbf{W}_t]_{i,n_t}  \quad\text{and}\quad   [\mathbf{m}]_i= [\mathbf{M}_t]_{i,n_t}   \quad\text{for}\quad  i=1,\dots, n_t-1, 
\]
\[ 
\z[\overset{\circ}{\mathbf{W}}_t]_{i,j} = [\mathbf{W}_t]_{i,j}    \quad\text{and}  \quad [\overset{\circ}{\mathbf{M}}_t]_{i,j} = [\mathbf{M}_t]_{i,j}   \quad \text{for}   \quad i,j=1,\dots, n_t-1.
\] 
  Recalling \eqref{eq:integral_W}, we observe that
$\overset{\circ}{ \mathbf{W}}_t$ is skew-symmetric and, since  $\overset{\circ}{\mathbf{M}}_t$ is symmetric, we can  write the  eigendecomposition of the pencils $(\overset{\circ}{\mathbf{W}}_t,  \overset{\circ}{\mathbf{M}}_t)$:
\begin{equation}
\label{eq:factorization_first_block}
\overset{\circ}{\mathbf{W}}_t \overset{\circ}{\mathbf{U}}_t =
\overset{\circ}{\mathbf{M}}_t \overset{\circ}{\mathbf{U}}_t
\overset{\circ}{\mathbf{\Lambda}}_t \quad  \text{ with } \quad
\overset{\circ \z}{\mathbf{U}_t^*}\overset{\circ}{\mathbf{M}}_t \overset{\circ}{\mathbf{U}}_t = \mathbb{I}_{n_t-1}, 
\end{equation}
where  $\overset{\circ}{\mathbf{U}}_t$ contains the complex
generalized eigenvectors and   $\overset{\circ}{\mathbf{\Lambda}}_t$ is
the diagonal matrix of the  generalized eigenvalues, that are pairs of
complex conjugate pure imaginary numbers plus, eventually, the
eigenvalue zero. From \eqref{eq:U_t}--\eqref{eq:split},  it follows 
\[
\mathbf{U}_t^*\mathbf{M}_t\mathbf{U}_t=\begin{bmatrix}
\mathbb{I}_{n_t-1}& \overset{\circ \z}{\mathbf{U}^*_t}\overset{\circ}{\mathbf{M}}_t\mathbf{r}+\overset{\circ \z}{\mathbf{U}^*_t}\mathbf{m}\rho\\
\mathbf{r}^*\overset{\circ}{\mathbf{M}}_t\overset{\circ}{\mathbf{U}}_t+\rho^*\mathbf{m}^T \overset{\circ}{\mathbf{U}}_t& \left[ \mathbf{r}^* \rho^* \right]\mathbf{M}_t \begin{bmatrix}
\mathbf{r}\\ \rho
\end{bmatrix}
\end{bmatrix},
\]
where for the top-left block we have used \eqref{eq:factorization_first_block}.

The orthogonality condition in \eqref{eq:Mt-orthogonality}  holds if and only if $\mathbf{r}$ and $\rho$ fulfil the two conditions:
 
\begin{subnumcases}{}
\overset{\circ \z}{\mathbf{U}^*_t}\overset{\circ}{\mathbf{M}}_t\mathbf{r}+\overset{\circ \z}{\mathbf{U}^*_t}\mathbf{m}\rho = \vect{0}, \label{eq:syst_all-1}\\
 \left[ \mathbf{r}^* \rho^* \right]\mathbf{M}_t \begin{bmatrix}
\mathbf{r}\\ \rho
\end{bmatrix} =1. \label{eq:syst_all-2}
\end{subnumcases}

In order to compute $\mathbf{r}$ and $\rho$, we first  find
$\mathbf{v}\in \mathbb{C}^{n_t-1}$ such that 
\begin{equation}\label{eq:syst_v}
 \overset{\circ}{\mathbf{M}}_t\mathbf{v}=- \mathbf{m};
 \end{equation}
then we normalize the vector  $\begin{bmatrix}
 \mathbf{v} \\ 1
 \end{bmatrix}$ w.r.t. the $\|\cdot\|_{\mathbf{M}_t}$-norm
 to get 
\[\begin{bmatrix}
\mathbf{r}\\ \rho
\end{bmatrix}:=\frac{\begin{bmatrix}
\mathbf{v}\\ 1
\end{bmatrix}}{  \left( [\mathbf{v}^*\ 1]\mathbf{M}_t\begin{bmatrix}
\mathbf{v}\\ 1
\end{bmatrix}\right)^{\tfrac12}  }
\]
that fulfils \eqref{eq:syst_all-1}--\eqref{eq:syst_all-2}.
Finally, we get  \eqref{eq:arrow} by defining
\begin{equation}
\label{eq:time_eig_2}
  \mathbf{\Delta}_t:=\mathbf{U}_t^*\mathbf{W}_t \mathbf{U}_t   =  \begin{bmatrix}
\overset{\circ}{\mathbf{\Lambda}}_t& \mathbf{g}\\[2pt]
  -\mathbf{g}^* 
  & \sigma
\end{bmatrix},
\end{equation}
where $\mathbf{g}:=\overset{\circ}{{\mathbf{U}}_t^*}\left[\overset{\circ}{\mathbf{W}}_t \ \mathbf{w}\right]\begin{bmatrix}
\mathbf{r}\\ \rho
\end{bmatrix}$ and $\sigma:=\left[ {\mathbf{r}}^*   {\rho}^*\right] \mathbf{W}_t \begin{bmatrix}
\mathbf{r}\\ \rho
\end{bmatrix}$. Note that matrix \eqref{eq:time_eig_2} has an arrowhead structure.

  To assess the stability of the new decomposition \eqref{eq:time_eig}, 
we set $T=1$ and we compute the  condition number  $\kappa_2(\mathbf{U}_t)$ for different values of spline degree  $p_t$    and for various  uniform discretizations with number of   elements $n_{el}$.  
 Thanks to \eqref{eq:Mt-orthogonality}, we have
 $\kappa_2(\mathbf{U}_t)=\sqrt{\kappa_2(
   {\mathbf{M}_t)}}$. The results, reported in Table
 \ref{tab:cond_number_space_new}, show that  the
 condition numbers  $\kappa_2(\mathbf{U}_t)$ are uniformly bounded
 w.r.t. the mesh refinement, they grow with respect to the polynomial
 degree but they are moderately small for all the degrees of interest. 
 We conclude that the factorization \eqref{eq:time_eig} for the time
 pencil $(\mathbf{W}_t, \mathbf{M}_t)$ is stable. 

{\renewcommand\arraystretch{1.1} 
  \begin{table}[h]                                                        
 	\centering                                                              
 	\begin{tabular}{|c|c|c|c|c|c|c|c|}                                      
 		\hline                                                                  
 		$n_{el}$ & $p_t=2$ & $p_t=3$ & $p_t=4$ & $p_t=5$ & $p_t=6$ & $p_t=7$ & $p_t=8$ \\                                              
 		\hline                                                                  
 		\z\z32 & $3.2 \cdot 10^0 $ & $ 5.2 \cdot 10^0$ & $ 8.3 \cdot 10^0$ & $ 1.3 \cdot 10^1$ & $ 2.2  \cdot 10^1$ & $ 3.6  \cdot 10^1$ & $ 5.9  \cdot 10^1$ \\   
 		\hline                                                          
 		\z\z64  & $3.3 \cdot 10^0 $ & $ 5.2 \cdot 10^0$ & $ 8.3 \cdot 10^0$ & $ 1.3 \cdot 10^1$ & $ 2.2  \cdot 10^1$ & $ 3.6  \cdot 10^1$ & $ 5.9  \cdot 10^1$ \\   
 		\hline                                                          
 		\z128 & $3.3 \cdot 10^0 $ & $ 5.2 \cdot 10^0$ & $ 8.3 \cdot 10^0$ & $ 1.3 \cdot 10^1$ & $ 2.2  \cdot 10^1$ & $ 3.6  \cdot 10^1$ & $ 5.9  \cdot 10^1$ \\   
 		\hline                                                          
 		\z256  & $3.3 \cdot 10^0 $ & $ 5.2 \cdot 10^0$ & $ 8.3 \cdot 10^0$ & $ 1.3 \cdot 10^1$ & $ 2.2  \cdot 10^1$ & $ 3.6  \cdot 10^1$ & $ 5.9  \cdot 10^1$ \\  
 		\hline                                                          
 		\z512  & $3.3 \cdot 10^0 $ & $ 5.2 \cdot 10^0$ & $ 8.3 \cdot 10^0$ & $ 1.3 \cdot 10^1$ & $ 2.2  \cdot 10^1$ & $ 3.6  \cdot 10^1$ & $ 5.9  \cdot 10^1$ \\   
 		\hline                                                          
 		1024  & $3.3 \cdot 10^0 $ & $ 5.2 \cdot 10^0$ & $ 8.3 \cdot 10^0$ & $ 1.3 \cdot 10^1$ & $ 2.2  \cdot 10^1$ & $ 3.6  \cdot 10^1$ & $ 5.9  \cdot 10^1$ \\ 
 		\hline                                                                  
 	\end{tabular} 
 	\caption{$\kappa_2 (\mathbf{U}_t)$ for different degrees $p_t$ and number of elements $n_{el}$.}                                                                               
 	\label{tab:cond_number_space_new}                                             
 \end{table}}

\subsection{Preconditioner application}
\label{sec:prec_appl}
The application of the preconditioner involves the solution of the linear system 
\begin{equation}
\label{eq:prec_appl}
 \widehat{\mathbf{A}}\mathbf{s}=\mathbf{r},
\end{equation}
where $\widehat{\mathbf{A}}$ has the   structure
\eqref{eq:prec_definition}.
We are able to efficiently solve system \eqref{eq:prec_appl} by extending the FD method. 
The starting points, that are involved in the setup of the preconditioner,  are the following ones:
\begin{itemize}
\item   for  the  pencils
  $(\widehat{\mathbf{K}}_l, \widehat{\mathbf{M}}_l)$ for  $l=1,\dots,d$
  we have   the factorizations \eqref{eq:space_eig};
\item for the pencil  $(\mathbf{W}_t,
  \mathbf{M}_t)$ 
 we have the factorization \eqref{eq:time_eig}.
\end{itemize}
Then, by defining $
\mathbf{U}_s:=\mathbf{U}_d\otimes\dots\otimes\mathbf{U}_1$ and $\mathbf{\Lambda}_s:=\sum_{l=1}^d \mathbb{I}_{n_{s,d}}\otimes\dots\otimes\mathbb{I}_{n_{s,l+1}} \otimes
  \mathbf{\Lambda}_l \otimes \mathbb{I}_{n_{s,l-1}}\otimes\dots\otimes\mathbb{I}_{n_{s,1}}$,    we have for the matrix $\widehat{\mathbf{A}}$ the factorization 
\begin{equation}\label{eq:preconditioner_str1}
\widehat{\mathbf{A}} =\left(\mathbf{U}_t^*\otimes \mathbf{U}_{s}^T\right)^{-1}\left( { \gamma}\mathbf{\Delta}_t \otimes \mathbb{I}_{N_s}+ { \nu}{\mathbb{I}}_{n_t} \otimes  {\mathbf{\Lambda}}_s\right)\left(\mathbf{U}_t\otimes \mathbf{U}_{s}\right)^{-1} .
\end{equation}
Note that the second factor in \eqref{eq:preconditioner_str1} has the
block-arrowhead structure 
 \begin{equation}
  \begin{aligned}
    {  \gamma}\mathbf{\Delta}_t \otimes \mathbb{I}_{N_s}+  {  \nu}{\mathbb{I}}_{n_t}
    \otimes  {\mathbf{\Lambda}}_s & =
\begin{bmatrix}
\mathbf{H}_1 &  &  & \mathbf{B}_1 \\[2pt]
   &\ddots & &\vdots \\[2pt]
  \quad &  & \mathbf{H}_{n_t-1} &   \mathbf{B}_{n_t-1} \\[2pt]
-\mathbf{B}^*_1 &  \ldots & -\mathbf{B}^*_{n_t-1} & \mathbf{H}_{n_t}
\end{bmatrix}
  \end{aligned}
\label{eq:syst-arr}
\end{equation}
where  $\mathbf{H}_i$ and $\mathbf{B}_i$
 are diagonal matrices defined as 
 \[\mathbf{H}_i:={  \gamma}[\mathbf{\Lambda}_t]_{i,i}\mathbb{I}_{N_s}+{  \nu}\mathbf{\Lambda}_s\quad \text{ and }
 \quad \mathbf{B}_i:={  \gamma}[\mathbf{g}]_i\mathbb{I}_{N_s}  \quad  
  \text{ for }\quad i=1,\dots,n_t-1, \]  \[ \mathbf{H}_{n_t}:={  \gamma}\sigma{\mathbb{I}}_{N_s}+ { \nu}\mathbf{\Lambda}_s. \] 
The matrix \eqref{eq:syst-arr} has   the following easy-to-invert block LU decomposition 
\begin{align}
\label{eq:block_LU} 
 \gamma\mathbf{\Delta}_t \otimes & \mathbb{I}_{N_s}  +  {  \nu}{\mathbb{I}}_{n_t}
    \otimes  {\mathbf{\Lambda}}_s \\&  = \begin{bmatrix}
\mathbb{I}_{N_s} & & &\\
 & \ddots& & \\
 & & \mathbb{I}_{N_s} & \\
 -\mathbf{B}^*_1 \mathbf{H}_1^{-1} & \dots & -\mathbf{B}^*_{n_t-1}\mathbf{H}_{n_t-1}^{-1}  & \mathbb{I}_{N_s}
\end{bmatrix}
\begin{bmatrix}
\mathbf{H}_1 & & & \mathbf{B}_1\\
& \ddots & & \vdots \\
&  &\mathbf{H}_{n_t-1}  & \mathbf{B}_{n_t-1}\\
& & & \mathbf{S}
\end{bmatrix}\nonumber
\end{align}
  where
 $\mathbf{S}:=\mathbf{H}_{n_t}+\sum_{i=1}^{n_t-1}
\mathbf{B}^*_i\mathbf{H}_i^{-1}\mathbf{B}_i$ is a diagonal matrix.

Summarising, the solution of \eqref{eq:prec_appl} can be computed  by the following algorithm.
 
\begin{algorithm}[H]
\caption{Extended FD}\label{al:direct_P}
\begin{algorithmic}[1]
\State Compute the factorizations \eqref{eq:space_eig} and \eqref{eq:time_eig}. 
\State Compute $\widetilde{\mathbf{s}} = (\mathbf{U}_t^*\otimes \mathbf{U}_s^T )\mathbf{s} $.
\State Compute $\widetilde{\mathbf{q}} = \left( {  \gamma}\mathbf{\Delta}_t \otimes \mathbb{I}_{N_s}+ {  \nu} {\mathbb{I}}_{n_t} \otimes  {\mathbf{\Lambda}}_s\right)^{-1} \widetilde{\mathbf{s}}. $
\State Compute $\mathbf{r} = (\mathbf{U}_t\otimes \mathbf{U}_s)\ \widetilde{\mathbf{q}}. $
\end{algorithmic}
\end{algorithm}

\subsection{Preconditioner robustness: partial inclusion of the geometry}
\label{sec:prec_geo}
  The  preconditioner \eqref{eq:prec_definition} does not
  incorporate any information on the spatial parametrization
  $\vect{F}$. 
Thus, the quality of the preconditioning strategy may depend on the geometry map: we see this trend in the  numerical tests presented in  the upper tables of
  Table \ref{tab:rev_quarter} and Table \ref{tab:hollow_torus} of Section \ref{sec:numerical-tests} .
 However, we can generalize \eqref{eq:prec_definition}  by  including
  in the univariate spatial matrices $ \widehat{\mathbf{K}}_l, \widehat{\mathbf{M}}_l $ for $l=1,\dots,d$   a suitable approximation of $\vect{F}$,  
without increasing the asymptotic computational cost. A similar approach has been used also in
\cite{Montardini2018} for the Stokes problem and in  \cite{Montardini2018space} for a least-squares formulation of the heat equation.   We briefly give an overview
of this strategy. 

Referring to Section \ref{sec:iso_space} for the notation of the basis functions, we rewrite the entries of the system matrix \eqref{eq:sys_solve}    in the parametric domain as
\begin{align*} 
 [\mathbf{A}]_{i,j}& =\mathcal{A}({B}_{j,\vect{p}} , {B}_{i,\vect{p}} )\nonumber\\ &=  \gamma \int_0^1\! \int_{\widehat{\Omega}} \tfrac1T    \partial_{\tau}{\widehat{B}}_{ {j}, \vect{p}} {\widehat{B}}_{ {i}, \vect{p}}   | \text{det}(J_{\vect{G}})|\,   \d\widehat{\Omega}\,\d\tau + \int_0^1\! \int_{\widehat{\Omega}} \nu (\nabla {\widehat{B}}_{ {j}, \vect{p}})^T J_{\vect{G}}^{-1}J_{\vect{G}}^{-T} \nabla  {\widehat{B}}_{ {i}, \vect{p}}  | \text{det}(J_{\vect{G}})|\,\d\widehat{\Omega}\,\d\tau \nonumber \\
& ={ \int_0^1\! \int_{\widehat{\Omega}}     \left[  (\nabla{\widehat{B}}_{ {j}, \vect{p}} )^T\ \ {   \partial_{\tau} }{\widehat{B}}_{ {j}, \vect{p}}  \right] \begin{bmatrix} \nu T\ \mathbb{I}_{d} & \\ & \gamma \end{bmatrix} \mathfrak{C}   \left[(\nabla{\widehat{B}}_{ {i}, \vect{p}})^T   \ \ {\widehat{B}}_{ {i}, \vect{p}}   \right]^T   \d\widehat{\Omega}\,\d\tau,}
\end{align*} 
where
\[ \mathfrak{C} :=  \begin{bmatrix}
 J_{\vect{F}}^{-1}J_{\vect{F}}^{-T}|\det(J_{\vect{F}})| & \\
 & | \text{det}(J_{\vect{F}})|   \end{bmatrix} \]
 and where we used that    ${B}_{i,\vect{p}} = \widehat{B}_{i,\vect{p}}\circ\vect{G}^{-1}$, ${B}_{j,\vect{p}} = \widehat{B}_{j,\vect{p}}\circ\vect{G}^{-1} $ and $| \text{det}(J_{\vect{G}})|=T| \text{det}(J_{\vect{F}})|$.   
The construction of the preconditioner is based 
on the following approximation of the diagonal entries only of $\mathfrak{C}$:

 \begin{subequations}
 \label{eq:separation}
 \begin{align}
 & [ \mathfrak{C}(\vect{\eta})]_{l,l}  \approx  [\widetilde{\mathfrak{C}}(\vect{\eta})]_{l,l} :={ \varphi_1(\eta_1)\dots \varphi_{l-1}(\eta_{l-1})\Phi_{l}(\eta_{l})\varphi_{l+1}(\eta_{l+1})\dots\varphi_d(\eta_d)} \quad \text{for }   l=1,\dots,d, \\  
& [ \mathfrak{C}(\vect{\eta})]_{d+1,d+1} \approx  [\widetilde{\mathfrak{C}}(\vect{\eta})]_{d+1,d+1}:={ \varphi_1(\eta_1) \dots\varphi_d(\eta_d). }\z
 \end{align}
\end{subequations}

In order to compute such  an   approximation, we interpolate the functions  $[\widetilde{\mathfrak{C}}(\vect{\eta})]_{l,l}$ in \eqref{eq:separation}   by piecewise constants in each element and we build the univariate factors $\varphi_l$ and $\Phi_l$ by using the separation of variables algorithm detailed in \cite[Appendix C]{Montardini2018space}.
The computational cost of the approximation above is proportional to the number of elements   in $\Omega$, that, when using smooth B-splines, is almost equal to $N_{s}$, independent of $p_s$ and $p_t$ and thus negligible in the whole iterative strategy.

 Then we define 
\[
 [\widetilde{ \mathbf{A}}]_{i,j}:= \int_0^1\! \int_{\widehat{\Omega}}     \left [ (\nabla{\widehat{B}}_{ {j}, \vect{p}}) ^T   \ \      \partial_{\tau}{\widehat{B}}_{ {j}, \vect{p}}  \right] \begin{bmatrix} \nu T \ \mathbb{I}_d & \\ & \gamma \end{bmatrix} \widetilde{\mathfrak{C}}   \left[   (\nabla{\widehat{B}}_{ {i}, \vect{p}})^T  \ \   {\widehat{B}}_{ {i}, \vect{p}} \right]^T\,   \d\widehat{\Omega}\,\d\tau.
 \]
 
 The previous matrix maintains the same  Kronecker structure as \eqref{eq:prec_definition}. Indeed we have that 
\begin{equation}
\label{eq:over_A}
\widetilde{ \mathbf{A} } = \gamma \mathbf{W}_t\otimes\widetilde{\mathbf{M}}_s+ \nu \mathbf{M}_t\otimes\widetilde{\mathbf{K}}_s,
\end{equation}
where  
\begin{equation*} 
\widetilde{\mathbf{K}}_s:=\sum_{l=1}^d\widetilde{\mathbf{M}}_d\otimes\dots\otimes\widetilde{\mathbf{M}}_{l+1}\otimes\widetilde{\mathbf{K}}_l\otimes\widetilde{\mathbf{M}}_{l-1}\otimes\dots\otimes\widetilde{\mathbf{M}}_1, \qquad \widetilde{\mathbf{M}}_s:=\widetilde{\mathbf{M}}_d\otimes\dots\otimes\widetilde{\mathbf{M}}_1,
\end{equation*}
and where  for $l=1,\dots,d$ and for $ i,j=1,\dots,n_{s,l}$  we define
\[
[\widetilde{\mathbf{K}}_l]_{i,j}:=\int_0^1{  \Phi_{l}}(\eta_l)\widehat{b}'_{i,p_s}(\eta_l)\widehat{b}'_{j,p_s}(\eta_l) \d\eta_l  
\ \text{ and } \
  [\widetilde{\mathbf{M}}_l]_{i,j}:=\int_0^1{ \varphi_{l}}(\eta_l)\widehat{b}_{i,p_s}(\eta_l)\widehat{b}_{j,p_s}(\eta_l) \d\eta_l.
\] 
We remark that the application of   \eqref{eq:over_A} can still be performed by Algorithm \ref{al:direct_P}.
Finally, we apply a diagonal scaling on $\widetilde{\mathbf{A}}$ and we define the preconditioner as
\begin{equation}
\label{eq:prec_geo}
\widehat{\mathbf{A}}^{\vect{G}}:= \mathbf{D}^{\tfrac12} \widetilde{\mathbf{A}} \mathbf{D}^{\tfrac12} 
\end{equation}
 where $\displaystyle{
[\mathbf{D}]_{i,i}:=\frac{[\mathbf{A}]_{i,i}}{[\widetilde{\mathbf{A}}]_{i,i}}}  
 $ for $i=1,\dots,N_{dof}$.

\subsubsection{The case of non-constant separable coefficients}
\label{sec:sep_coeff}
We briefly discuss a generalization of the preconditioning strategy to the case of non-constant   equation coefficients $\gamma$ and $\nu$. We assume that $\gamma$ and $\nu $ are positive functions defined over $\Omega \times [0,T]$ and that they are separable in space and in time, i.e. we can write
\[ \gamma(\vect{x},t) = \gamma_s(\vect{x}) \gamma_t(t), \qquad \nu(\vect{x},t) = \nu_s(\vect{x}) \nu_t(t), \]
with $\gamma_s, \gamma_t, \nu_s$ and $\nu_t$ positive functions. 

Now, the first equation of \eqref{eq:problem} can be written as 
\[ \gamma_s \partial_t u - \nabla \cdot \left( \frac{\nu_t}{\gamma_t} \nu_s \nabla u \right) = \frac{f}{\gamma_t}. \]

  We discretize this equation    as described in Section \ref{sec:problem} and we   generalize the definition of the linear system \eqref{eq:syst_mat}  with  
\[ \mathbf{A}  :=   \mathbf{W}_t \otimes   \underline{\mathbf{M}}_s + \underline{\mathbf{M}}_t\otimes  \underline{\mathbf{K}}_s, \]
where $\mathbf{W}_t$ is defined as in \eqref{eq:time_mat}, while
for $i,j=1,\dots,n_t$
\[ [\underline{\mathbf{M}}_t]_{i,j} := \int_{0}^T\, \frac{\nu_t(t)}{\gamma_t(t)} b_{i, p_t}(t)\,  b_{j, p_t}(t)  \, \dt  \]
and for $i,j=1,\dots,N_s$
\[ [\underline{\mathbf{M}}_s]_{i,j} := \int_{\Omega} \gamma_s(\vect{x}) B_{i, \vect{p}_s}(\vect{x}) \  B_{j, \vect{p}_s}(\vect{x}) \ \d\Omega \ \text{ and } \ [ \underline{\mathbf{K}}_s]_{i,j}  :=  \int_{\Omega} \nu_s(\vect{x}) \nabla  B_{i,\vect{p}_s}(\vect{x})\cdot \nabla  B_{j, \vect{p}_s}(\vect{x}) \ \d\Omega. \]  
 
Then, the preconditioner that we propose is defined as in \eqref{eq:prec_geo}
\[ \widehat{\mathbf{A}}^{\vect{G}}   := \mathbf{D}^{\tfrac12}\widetilde{\mathbf{A}}\mathbf{D}^{\tfrac12}, \]
but here  we generalize \eqref{eq:over_A} with 
\[ \widetilde{\mathbf{A}}:= \mathbf{W}_t \otimes  \  \breve{{\mathbf{M}}}_s + \underline{\mathbf{M}}_t\otimes  \breve{ {\mathbf{K}}}_s, \]
where the matrices $\breve{{\mathbf{K}}}_s$ and $\breve{{\mathbf{M}}}_s$ are obtained by using an approximation technique analogous to the one described previously in this section, with $\gamma_s$ and $\nu_s$   included in the coefficient matrix $\mathfrak{C}$. 
  The preconditioner $\widetilde{\mathbf{A}}$ can still be applied as described in Section \ref{sec:prec_appl}. Note that, for this purpose, it is crucial that $\mathbf{W}_t$ does not incorporate any time-dependent coefficient, since this would invalidate \eqref{eq:integral_W}.

   \subsection{Computational cost and memory requirement} 
   \label{sec:cost_memory} 
The   matrix \eqref{eq:syst_mat}  is neither positive definite  nor symmetric and we choose GMRES as linear solver for the system \eqref{eq:sys_solve}.
In GMRES, the orthogonalization of the basis of the Krylov subspace makes the computational cost nonlinear with respect to the number of iterations.
However, as long as this number is not too high, at each iteration the two dominant costs are the application of the preconditioning strategy and the computation of the residual. 

 We assume, for simplicity that for $l=1,\dots, d$ the matrices $\widehat{\mathbf{K}}_l$,
$\widehat{\mathbf{M}}_l$ and $\widetilde{\mathbf{K}}_l$,
$\widetilde{\mathbf{M}}_l$ have dimensions $n_s\times n_s$ and that the matrices $\mathbf{W}_t$,
$\mathbf{M}_t$ and $\widetilde{\mathbf{W}}_t$,
$\widetilde{\mathbf{M}}_t$ have dimensions $n_t \times n_t$. Thus  the  total number of degrees-of-freedom is $N_{dof}=N_sn_t=n_s^dn_t$.

 The setup  of $\widehat{\mathbf{A}}$ and  $\widehat{\mathbf{A}}^{\vect{G}}$ includes the operations performed in Step 1 of Algorithm \ref{al:direct_P}, i.e. $d$ spatial eigendecompositions, that have a total cost of $O(dn_s^3)$ FLOPs, and the factorization of the time matrices. The computational cost of  the latter, that is  the sum of the cost of the eigendecomposition \eqref{eq:factorization_first_block} and of the cost to compute  the solution $\mathbf{v}$ of the linear system \eqref{eq:syst_v},  yields a cost of $O(n_t^3)$ FLOPs.  Then, the total   cost of the spatial and time factorizations  is   $
  O(dn_s^3 + n_t^3)$ FLOPs. Note that, if $n_t = O(n_s)$, this cost is optimal for $d=2$	and negligible for $d=3$.
  The setup cost of $\widehat{\mathbf{A}}^{\vect{G}}$ includes also the the construction of the diagonal matrix $\mathbf{D}$, that has a negligible cost, and the computation of the $2d$ approximations   $\varphi_1,\dots,\varphi_{d}$ and $\Phi_1,\dots,\Phi_{d}$ in \eqref{eq:separation},  whose cost is  negligible  too, as mentioned in Section \ref{sec:prec_geo}.
   We remark that the setup of the preconditioners has to be performed only once, since the matrices involved do not change during the iterative procedure.
  
The application of the preconditioner is performed by Steps 2-4 of Algorithm \ref{al:direct_P}.
 Exploiting \eqref{eq:kron_vec_multi},  Step 2 and Step 4 costs $4(dn_s^{d+1}n_t + n_t^2n_s^d) = 4 N_{dof}(dn_s+n_t)$ FLOPs.  
The use of the  block LU decomposition \eqref{eq:block_LU} makes the cost for Step 3 equal to  $O(N_{dof})$  FLOPs.  

In conclusion, the total cost of Algorithm \ref{al:direct_P} is $4 N_{dof}(dn_s+n_t) +O(N_{dof})$ FLOPs. The non-optimal dominant cost of Step 2 and Step 4 is determined by the dense matrix-matrix products.
 However, these operations are usually implemented on modern computers in a very efficient way.   For this reason, in our numerical tests, the overall serial computational time grows almost as $O(N_{dof})$, see Figure \ref{fig:setup_oneapp} in Section \ref{sec:numerical-tests}.
  
The other dominant computational cost in a GMRES iteration is the cost of the residual computation, that is the multiplication of the matrix $\mathbf{A}$  with a vector. This multiplication is done by exploiting the special structure \eqref{eq:syst_mat}, that allows a matrix-free approach and the use of formula \eqref{eq:kron_vec_multi}. Note in particular that we do not need to compute and to store the whole matrix $\mathbf{A}$, but only its time and spatial factors. 
Since the time matrices  $\mathbf{M}_t$ and $\mathbf{W}_t$ are banded with a band of width $2p_t+1$ and the spatial matrices $\mathbf{K}_s$ and $\mathbf{M}_s$ have roughly $N_s (2p_s+1)^d$ nonzero entries, we have that the computational cost of a single matrix-vector product is $6 N_{dof} [(2p_s+1)^d+2p_t+1] \approx 6 N_{dof} (2p+1)^d = O( N_{dof} p^d )$ FLOPs, if we assume $p=p_s\approx p_t$. The numerical experiments reported in Table \ref{tab:rev_quarter_time} of Section \ref{sec:numerical-tests} show that the dominant cost in the iterative solver is represented by the residual computation. This is a typical behaviour of the FD-based preconditioning strategies, see    \cite{Montardini2018space,Montardini2018,Sangalli2016}.

  We now investigate the memory consumption. For the preconditioner we have to store  the eigenvector spatial matrices $\mathbf{U}_1, \dots, \mathbf{U}_d$, the  time matrix $\mathbf{U}_t$  
and the block-arrowhead matrix \eqref{eq:syst-arr}. The memory required is roughly 
\[n_t^2+dn_s^2+2N_{dof}.\]
 For the system matrix, we have to store the time factors $\mathbf{M}_t$ and $\mathbf{W}_t$ and the spatial factors $\mathbf{M}_s$ and $\mathbf{K}_s$. Thus the memory required is roughly
 \[
 2(2p_t+1)n_t+2(2p_s+1)^d N_s \approx 4 p_t n_t + 2^{d+1} p_s^d N_s .
 \] 
Analogously to  the least-squares case of \cite{Montardini2018space}, we conclude that, in terms of memory requirement, our approach is very attractive w.r.t. other approaches, e.g. the ones obtained by discretizing in space and in time separately. 
For example if we assume $d=3$, $p_t\approx p_s=p$ and $n_t^2\leq Cp^3N_s$, then the total memory consumption is $O(p^3N_s+N_{dof})$, that is equal to the sum of the  memory needed to store the Galerkin matrices associated to spatial variables and the   memory needed  to store the solution of the problem.

 We remark that we could avoid storing the factors of $\mathbf{A}$ by using the matrix-free approach of \cite{Sangalli2018}. The memory and the computational cost of the iterative solver would significantly improve,  both for the setup and the matrix-vector multiplications. However, we do not pursue this strategy, as it is beyond the scope of this paper.
 
 \begin{rmk}
For a better computational efficiency,
we use a real-arithmetic version of Algorithm \ref{al:direct_P}: { we replace }  $ \widetilde{\mathbf{\Lambda}_t}$ in {  \eqref{eq:time_eig_2}}
by a block diagonal matrix where  each pair
of generalized eigenvalues $i\lambda_j$ and $-i\lambda_j$ is replaced
by a  diagonal block   
 \[
 \begin{bmatrix}
 0 & \lambda_j\\
 -\lambda_j & 0
 \end{bmatrix}
 \]
  and   we set 
 \[\mathbf{H}_j:= \begin{bmatrix}
 { \nu}\mathbf{\Lambda}_s & {  \gamma}\lambda_j\mathbb{I}_{n_s}\\
 -{  \gamma}\lambda_j\mathbb{I}_{n_s} &  { \nu}\mathbf{\Lambda}_s
 \end{bmatrix}\quad
   \text{ and }\quad \mathbf{B}_j:={ \gamma}\left[ [\mathbf{g}]_{2(j-1)+1}\mathbb{I}_{N_s},  \quad [\mathbf{g}]_{2(j-1)+2}\mathbb{I}_{N_s}\right]^T.\] 
 Note that the computational cost of Step 3 in Algorithm \ref{al:direct_P} does not change, as we have   
 \[
 \mathbf{H}_j^{-1}:=\begin{bmatrix}
 {  \tfrac1\nu}\mathbf{\Lambda}_s^{-1}-{ \frac{\gamma^2}{\nu^2}}\lambda_j^2 \mathbf{\Lambda}_s^{-1}\mathbf{R}_j^{-1}\mathbf{\Lambda}_s^{-1} & -{  \frac{\gamma }{\nu }}\lambda_j\mathbf{\Lambda}_s^{-1}\mathbf{R}_j^{-1}\\[5pt]
{ \frac{\gamma}{\nu}}\lambda_j\mathbf{R}_j^{-1}\mathbf{\Lambda}_s^{-1} &  \mathbf{R}_j^{-1}  
 \end{bmatrix}.
 \]
 where $\mathbf{R}_j:={  \nu}\mathbf{\Lambda}_s + { \frac{\gamma^2}{\nu}}\lambda_j^2\mathbf{\Lambda}_s^{-1}. $
 \end{rmk}
  

\section{Numerical Results}
\label{sec:numerical-tests}
{
In this section we first present the numerical experiments that assess the convergence behavior  of the Galerkin approximation and then we analyze the performance of the preconditioners. We also present a comparison with the 
  the  least-squares  solver of \cite{Montardini2018space}.

We consider only sequential executions and we force the use of  a single computational thread in a Intel Core i7-5820K processor, running at 3.30 GHz and with 64 GB of RAM.
 
 The tests are performed with Matlab R2015a and GeoPDEs toolbox \cite{Vazquez2016}. 
We use  the \texttt{eig} Matlab function to compute the  generalized eigendecompositions present in  Step 1 of Algorithm \ref{al:direct_P}
, while  Tensorlab toolbox \cite{Sorber2014} is employed to perform the  multiplications with Kronecker matrices occurring in Step 2 and Step 4.  The  solution of the linear system \eqref{eq:syst_v}  is performed by Matlab  direct solver (backslash operator ``$\backslash$'').
The linear system is solved by GMRES, with tolerance   equal to  $10^{-8}$  and with   the null vector as initial guess in all tests. We remark that GMRES computes and stores a full orthonormal basis for the Krylov space, and this might be unfeasible if the number of iterations is too large. This issue could be addressed by switching to a different solver for nonsymmetric systems, like e.g. BiCGStab, or using the restarted version of GMRES.

According to Remark \ref{rem:on-the-error-bound}, we use  the same
mesh-size in space and in time $h_s=h_t=:h$,  and use
splines of maximal continuity  and  same degree in
space and in time  $p_t=p_s=:p$. For the sake of simplicity, we
also consider uniform knot vectors, and  denote the number of
elements in each parametric direction  by $n_{el}:=\frac{1}{h}$.

In out tables, the symbol $``\ast"$   denotes  that the construction of the matrix  factors of $\mathbf{A}$ (see \eqref{eq:syst_mat}) goes out of memory,
  while the symbol  $``\ast \ast"$ indicates that the dimension of the
  Krylov subspace is too high and there   is   not  enough memory to store all the GMRES iterates. 
 We remark that  in all the tables the total solving time of the iterative strategies includes also the setup time of the considered preconditioner.
 
\begin{figure}
 \centering
 \subfloat[][Rotated quarter of annulus.\label{fig:rev-quarter}]
   {\includegraphics[trim=5cm 0cm 5cm 0cm, clip=true, scale=0.25]{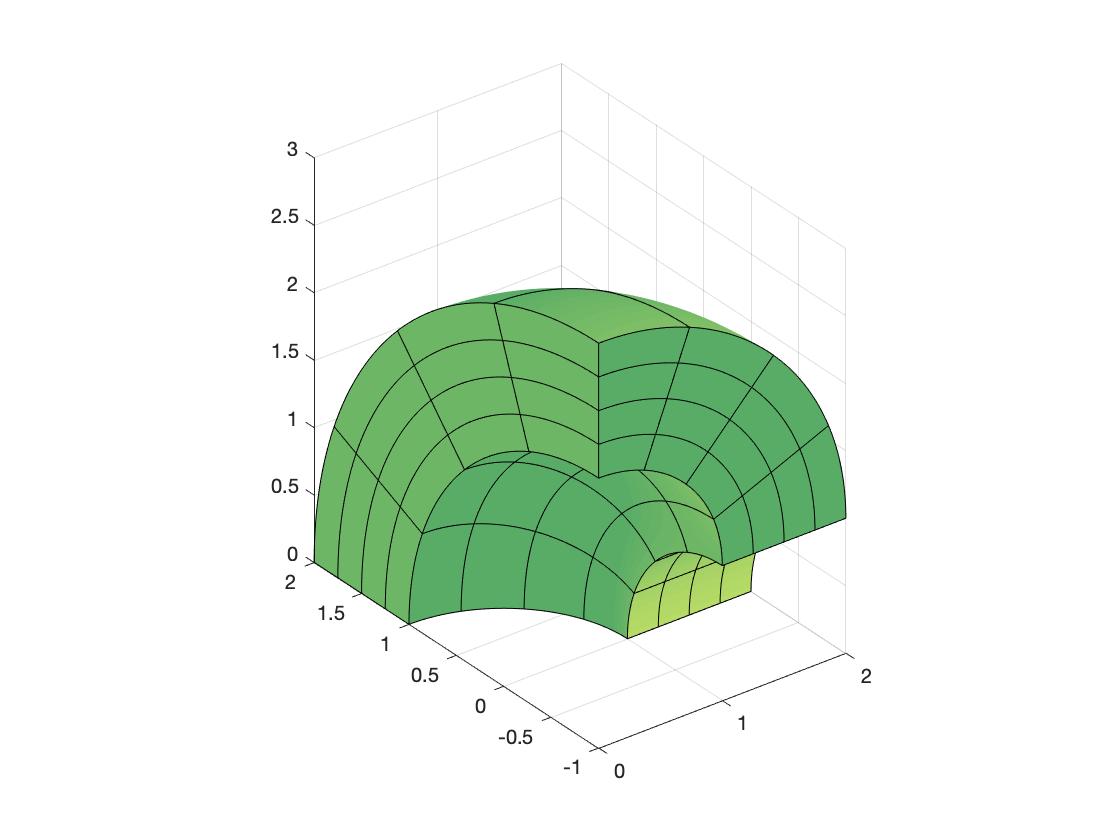}}\\ 
  \subfloat[][Hollow torus.\label{fig:hollow-torus}]
   {\includegraphics[trim=5cm 0cm 5cm 0cm, clip=true, scale=0.20]{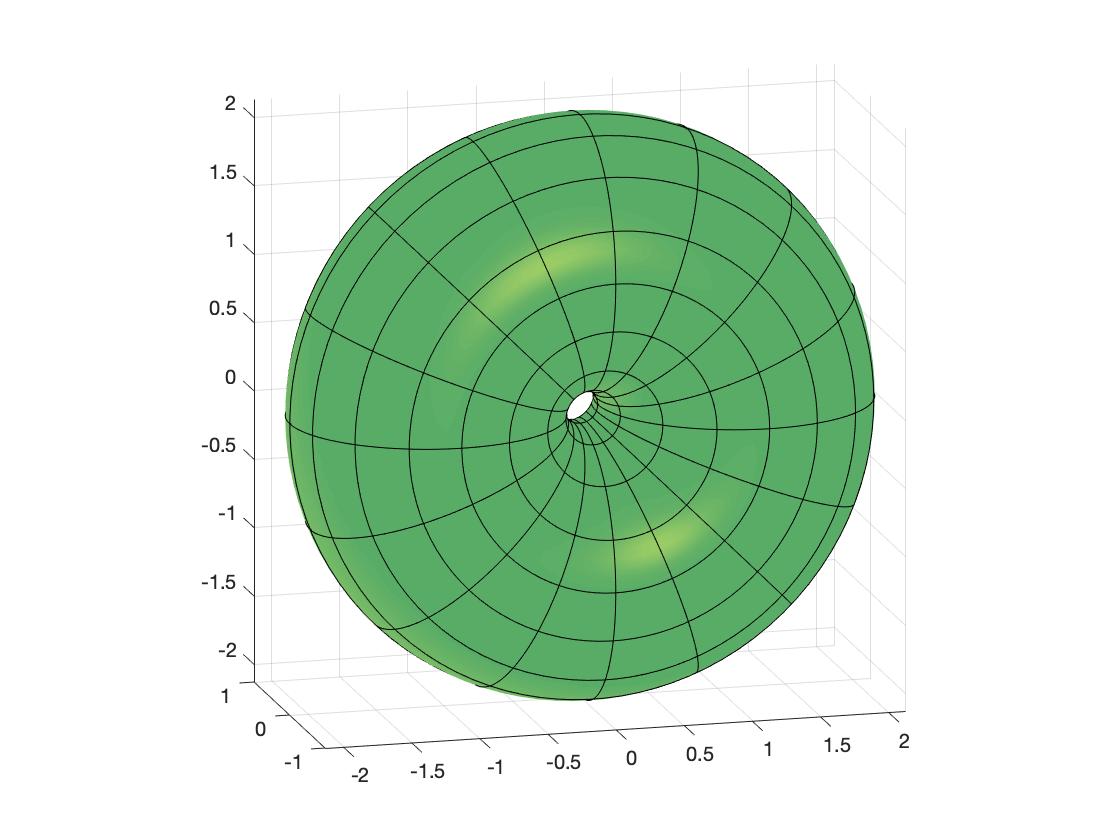}}  \quad
   \subfloat[][Section of the hollow torus.\label{fig:section_hollow}] 
   {\includegraphics[trim=5cm 0cm 5cm 0cm, clip=true, scale=0.20]{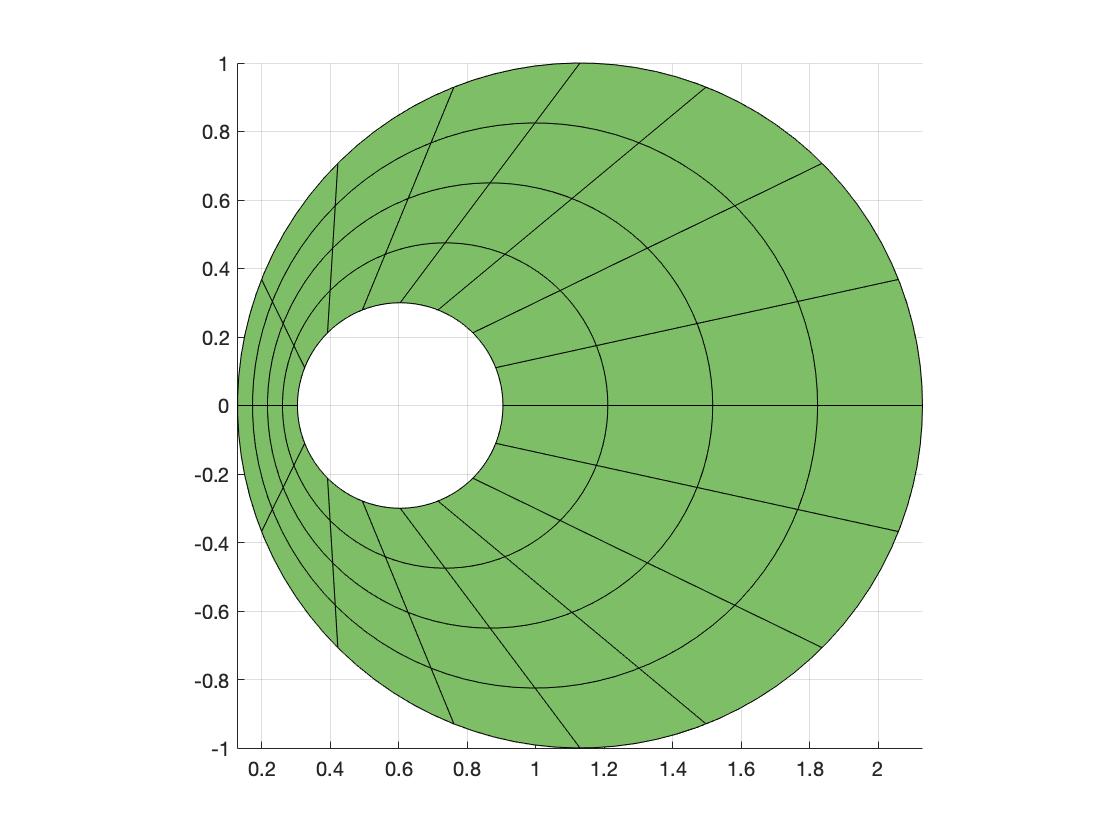}}\\
 \caption{Computational domains.  }
 \label{fig:Geometries}
\end{figure}

 \subsection{Orders of convergence}
 \label{sec:orders_conv}
We consider as spatial computational domain $\Omega$ a rotated quarter of annulus, represented in Figure \ref{fig:rev-quarter}: we rotate by $\tfrac\pi2$ a quarter of annulus with center in the origin, internal radius 1 and external radius 2 
  along the axis $\{(x_1,-1,0) \ | \ x_1\in\mathbb{R}\}$.  Dirichlet  and initial boundary conditions are set such that $ u(\vect{x},t)= -(x_1^2+x_2^2-1)(x_1^2+x_2^2-4)x_1x_2^2\sin(t)\sin(x_3)$ 
  is the exact solution with constants $\nu=\gamma=1$.

In Figure \ref{fig:err_X} we represent the relative errors in
$L^2(0,T;H^1_0(\Omega))\cap H^1(0,T;L^2(\Omega))$  norm, an easily
computable upper bound of $\|\cdot \|_{\mathcal{X}_h}$,  for polynomial degrees $p=1, 2, 3, 4, 5$. 
The rates of convergence are optimal, i.e. of order $O(h^{p})$, consistent with the a-priori estimate \eqref{eq:a-riori-error-bound}.
Even if this case is not covered by theoretical results, we also compute the relative errors in  $L^2( 0,T;L^2(\Omega))$ norm: the orders of convergence are  still optimal, that is of order $O(h^{p+1})$, as Figure \ref{fig:err_L2} shows.

\begin{figure}
 \centering
  \subfloat[][$L^2(0,T;H^1_0(\Omega))\cap H^1(0,T;L^2(\Omega))$ norm relative errors.\label{fig:err_X}]
   {\includegraphics[scale=0.25]{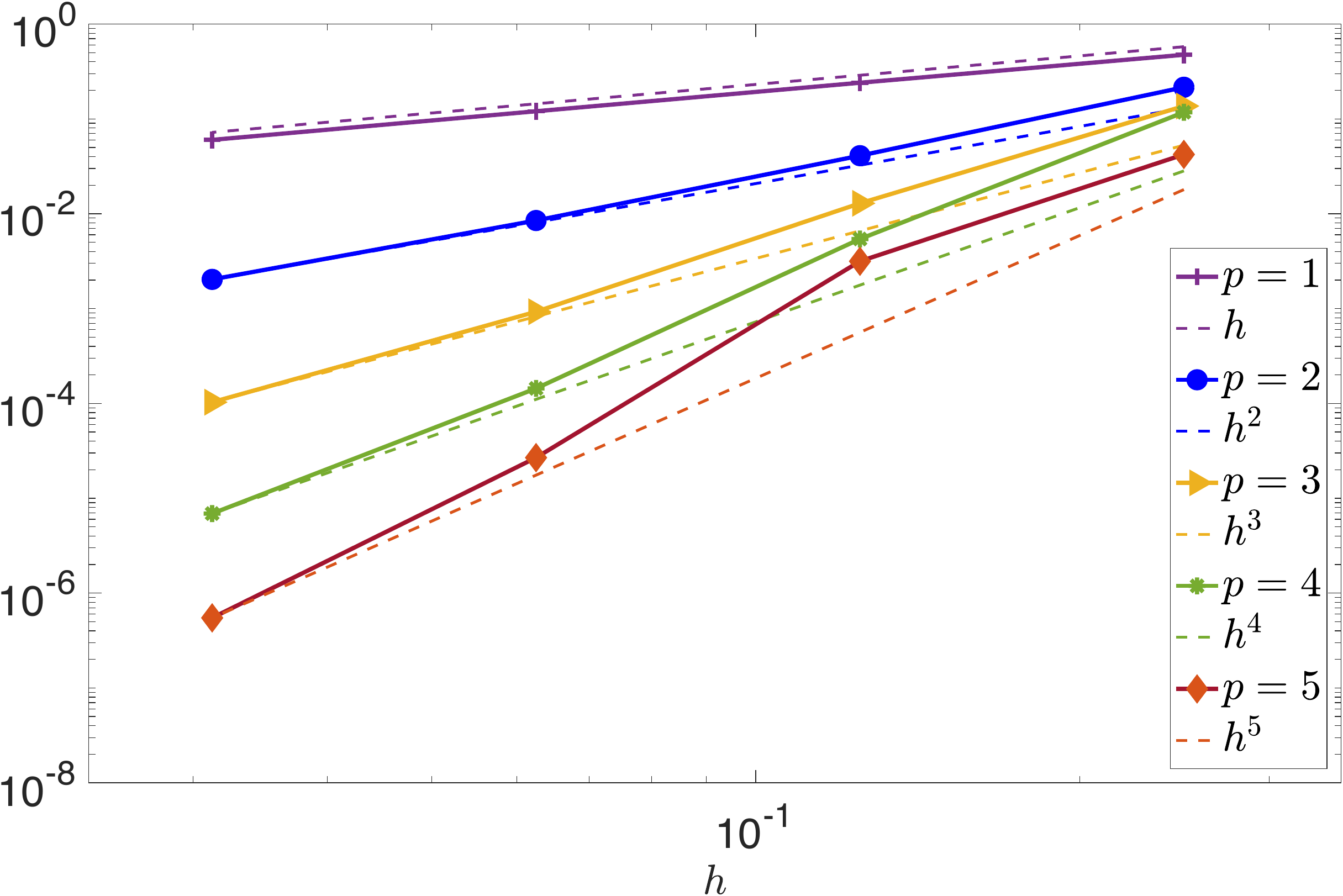}}  \quad  \subfloat[][$L^2( 0,T;L^2(\Omega))$ norm relative errors.\label{fig:err_L2}]
   {\includegraphics[scale=0.25]{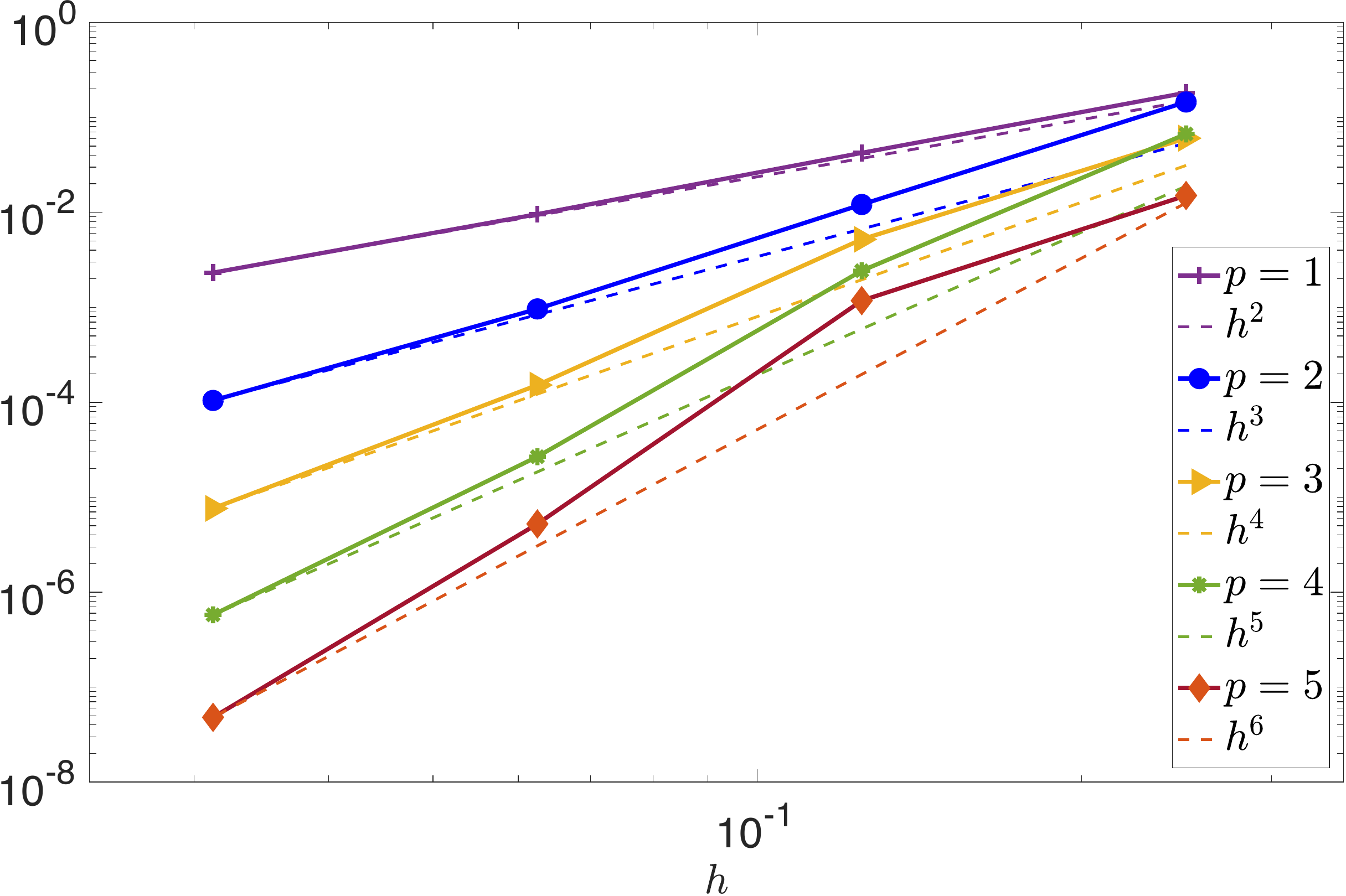}}
 \caption{ Relative errors.}
 \label{fig:errors}
\end{figure}

 \subsection{Performance of the preconditioner: rotated quarter of annulus}
We consider again as spatial computational domain $\Omega$ the rotated quarter of annulus  of Figure \ref{fig:rev-quarter}   and   the same exact solution, initial and boundary data as in   Section \ref{sec:orders_conv}. We analyze the performance of both $\widehat{\mathbf{A}}$ and $\widehat{\mathbf{A}}^{\vect{G}}$. 
The maximum dimension of the Krylov subspace is set equal to 100 for both the preconditioners  up to $n_{el}=64$. We are able to reach convergence and to perform the tests with  $\widehat{\mathbf{A}}^{\vect{G}}$,  $n_{el}=128$  and $p=1,2,3$ by  setting the maximum Krylov subspace dimension equal to 25.  
In Table \ref{tab:rev_quarter} we report   the number of iterations and the total solving time  of GMRES preconditioned with   $\widehat{\mathbf{A}}$  (upper table) and $\widehat{\mathbf{A}}^{\vect{G}}$ (middle table). 
The non-trivial geometry  clearly affects the performance of
$\widehat{\mathbf{A}}$, but,  when we include some information on the
parametrization by using  $\widehat{\mathbf{A}}^{\vect{G}}$, the number of iterations is more than halved and it is stable w.r.t. $p$ and $n_{el}$. Moreover, the computational times are one order of magnitude lower for  the highest degrees and $n_{el}$.
  In the lower table of Table \ref{tab:rev_quarter} we report the
  results of  \cite[Section 5, central table of Table 2]{Montardini2018space} obtained by
  solving the same problem with  the least-squares formulation and the
  related preconditioning strategy.    In this case the iterative
  solver  is the preconditioned conjugate gradient method, the
  tolerance is $10^{-8}$ and the initial guess is the null vector.
  The number of iterations is more than doubled and the computational
  times are three times higher   than the number of iterations and
  computational times of $\widehat{\mathbf{A}}^{\vect{G}}$, in the
  present setting.
 {\renewcommand\arraystretch{1.4} 
\begin{table}
\begin{center}
\begin{tabular}{|r|c|c|c|c|c|}
\hline
& \multicolumn{5}{|c|}{ \  $\widehat{\mathbf{A}}$\quad Iterations /  Time } \\
\hline
$n_{el}$ & $p=1$ &$p=2$ & $p=3$ & $p=4$ & $p=5$   \\
\hline
8 & 34 / \z\z0.20   &  37 / \z\z0.21 &  42  / \z\z\z0.42  &   46 / \z\z\z0.63    &     50 / \z\z\z1.13     \\
\hline
16 & 43 / \z\z1.15  &  46  / \z\z1.65   &  50 /   \z\z\z3.42      &   54 / \z\z\z5.80  &   57 /  \z\z11.87    \\
\hline
32 & 50 / \z22.75  & 53 / \z31.10 &   57 / \z\z54.02  &   61 /   \z\z96.06   &  64  / \z184.84   \\
\hline
64 & 57 / 586.73  & 60  /  764.26 &  67 / 1254.81   &  67 / 1858.55      &  71 /  3188.51 \\
\hline
128 &   $\ast\ast$  &    $\ast\ast$  &    $\ast\ast$   &    $\ast$    &  $\ast$   \\
\hline
\end{tabular} 
\vskip 2mm
\begin{tabular}{|r|c|c|c|c|c|}
\hline
& \multicolumn{5}{|c|}{ \   $\widehat{\mathbf{A}}^{\vect{G}}$\quad Iterations / Time } \\
\hline
$n_{el}$ & $p=1$ & $p=2$ & $p=3$ & $p=4$ & $p=5$  \\
\hline
8 &  11 / \z\z\z0.06 & 12 / \z\z\z0.09   &  12 / \z\z\z0.11    &   13 / \z\z0.18   &    14 /  \z\z0.29  \\
\hline
16 & 13  / \z\z\z0.26  & 14  / \z\z\z0.52 &   14 / \z\z\z1.18  &  14 /  \z\z1.44     &   15 /   \z\z3.85     \\
\hline
32 & 15 / \z\z\z4.73  & 15  / \z\z\z6.76     &  15 / \z\z12.67  &    15 /  \z21.47  &  16 /  \z40.54  \\
\hline
64 & 16 / \z107.24 & 16 /  \z135.74     &  18 / \z249.27  &    16 /  370.31  & 17 / 695.44 \\
\hline
128 &  17 /  2623.57 &    17  /   3105.76        &  17 / 5614.10  &   $\ast$   & $\ast$   \\
\hline
\end{tabular}
\vskip 2mm
\begin{tabular}{|r|c|c|c|c|}
\hline
& \multicolumn{4}{|c|}{Least-squares \quad Iterations / Time } \\
\hline
$n_{el}$ & $p_t=2$ & $p_t=3$ & $p_t=4$ & $p_t=5$  \\
\hline
8 &   24 / \z\z\z0.09 & 24  / \z\z\z\z0.13  &  26  /  \z\z\z0.37     &  26 /  \z\z\z0.60     \\
\hline
16 &   35 / \z\z\z0.77  & 34 / \z\z\z\z1.96    &  33 / \z\z\z4.62   &      33 /  \z\z\z9.35    \\
\hline
32 &   42 / \z\z17.03    & 41 / \z\z\z39.57  &  40 / \z\z82.35  & 41 / \z161.73  \\
\hline
64 & 46 / \z333.20 & 44 / \z\z716.03 & 49 / 1577.55 & 53 / 3384.08  \\
\hline
128 & 48 / 6767.08   & 50 / 14814.09  & $\ast$ & $\ast$\\
\hline
\end{tabular}
\end{center}
\caption{Revolved quarter domain. Performance of $\widehat{\mathbf{A}}$, $ \widehat{\mathbf{A}}^{\vect{G}}$   and the least-squares solver. }
\label{tab:rev_quarter}
\end{table}}

 Finally, we analyze with more details the performance of $\widehat{\mathbf{A}}^{\vect{G}}$.
First, we consider the percentage of time spent in the   application of  $\widehat{\mathbf{A}}^{\vect{G}}$  in one GMRES iteration. The results, reported in Table \ref{tab:rev_quarter_time}, clearly show that the dominant cost consists of the matrix-vector multiplications, while  the application of the preconditioner takes a small percentage of the total computational time, for example less than $10\%$ for polynomial degree 5 and $n_{el}=32$ or $n_{el}=64$.
 In Figure \ref{fig:setup_oneapp} we report     the setup time and
 the single application time of $\widehat{\mathbf{A}}^{\vect{G}}$
 w.r.t. the number of degrees of freedom.  As expected, the setup time
 is proportional to $O(N_{dof})$.   What is more interesting is that
 the application time grows slower than $O(N_{dof}^{5/4})$, i.e.  the
 FLOPS counting, and it is almost proportional to $O(N_{dof})$: this may
 be explained by the fact that the memory access is the dominant cost
due to the high-efficiency of CPU operations, in our case implemented
in Matlab Tensorlab \cite{Sorber2014}.

 {\renewcommand\arraystretch{1.4} 
\begin{table}
\begin{center}
\begin{tabular}{|r|c|c|c|c|c|}
\hline
$n_{el}$ & $p=1$ &$p=2$ & $p=3$ & $p=4$ & $p=5$   \\
\hline
8 &  73.02 \% &  79.24 \%  &  66.62 \%  &   46.94 \%   &   33.73  \%   \\
\hline
16 &  68.10  \% &  46.13 \% &  30.06 \%   &   17.63 \%  &  11.27  \%   \\
\hline
32 & 53.09 \% & 33.34 \% &   20.44 \% &  13.06 \%   & \z8.19 \%  \\
\hline
64 & 54.71 \% & 32.46 \% &   20.20 \%  &   12.52  \%   & \z7.31 \%  \\
\hline
128 & 54.12 \%  &  33.53 \%  &    18.89 \%  &    $\ast$    &  $\ast$   \\
\hline
\end{tabular}
\caption{Percentage of computing time of $\widehat{\mathbf{A}}^{\vect{G}}$ in one GMRES iteration for the rotated quarter domain.}
\label{tab:rev_quarter_time}
\end{center}
\end{table}}

\begin{figure}
 \centering
    {\includegraphics[scale=0.53]{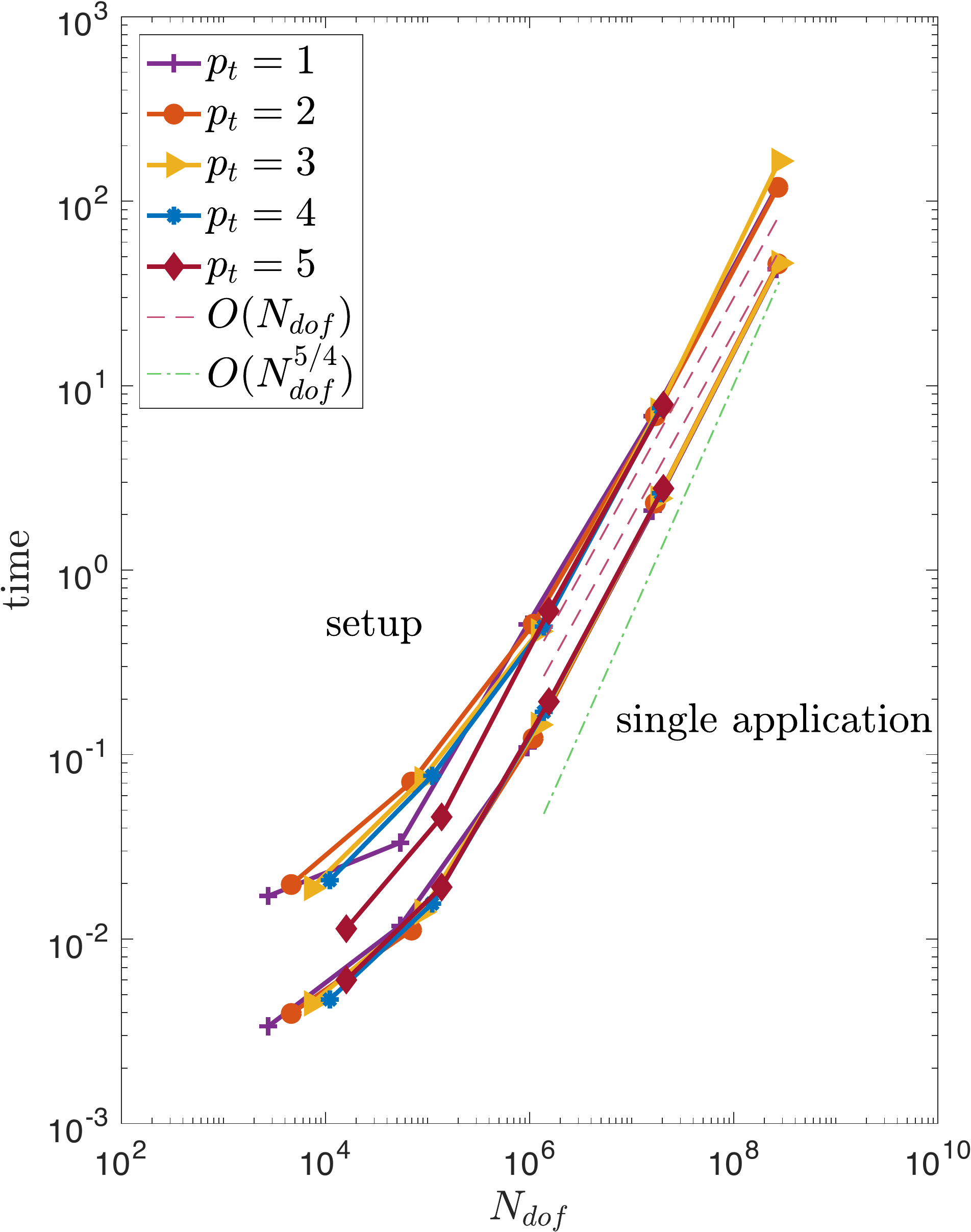}}
 \caption{Setup time and single application time of $\widehat{\mathbf{A}}^{\vect{G}}$ in the rotated quarter domain.}
 \label{fig:setup_oneapp}
\end{figure}

\subsection{Performance of the preconditioner: hollow torus}
We consider a torus with a hole (Figure \ref{fig:hollow-torus}) that is obtained by revolving an eccentric annulus (Figure \ref{fig:section_hollow}) along the $x_2$-axis. 
 For this problem we consider $\gamma =1$ and a separable in spatial and time variables, non-constant diffusion coefficient $\nu(\vect{x},t)=\nu_s(\vect{x})\nu_t(t)$. Precisely, we choose 
\[
\nu_s(\vect{x}):= 1+ \frac{99}{2}\left[1+ \frac{1}{{\left(1+\frac{x_1^2}{x_3^2}\right)}^{\tfrac12}}\right] \text{ and } \nu_t(t):= 1+ 50 \left[1+\cos\left(\frac{t}{2\pi}\right)\right] \ .
\] We remark that we are in the  setting described in Section \ref{sec:sep_coeff}.   The initial data and right-hand side are defined such that  \[
 u(\vect{x},t):= \sin(\pi x_1) \sin(\pi x_2) \sin(\pi x_3) \sin(\pi t)\] is the exact solution.
 In this case, we replace $\nu$ in \eqref{eq:prec_definition} with  its integral mean $  \tfrac1{T|\Omega|}\int_0^T\int_{\Omega}\nu(\vect{x}, t)\ \d\Omega\ \dt $.   
In Table \ref{tab:hollow_torus} we compare the performance of $\widehat{\mathbf{A}}$ (upper table) and  $\widehat{\mathbf{A}}^{\vect{G}}$ (lower table): the inclusion of the information about the geometry parametrization and of the variable  coefficient  significantly reduces   the number of iterations and the computational  times.

  {\renewcommand\arraystretch{1.4} 
\begin{table}
\begin{center} {
\begin{tabular}{|r|c|c|c|c|c|}
\hline \normalsize
& \multicolumn{5}{|c|}{ \ $\widehat{\mathbf{A}}$\quad Iterations /  Time } \\
\hline
$n_{el}$ & $p=1$ &$p=2$ & $p=3$ & $p=4$ & $p=5$   \\
\hline
8 & \z32  / \z\z\z0.49    &  \z70 /  \z\z\z0.79   &   101 /  \z\z\z2.02  &   128 /  \z\z\z\z5.83   &    156  / \z\z\z14.48      \\
\hline
16 & \z98 / \z\z\z5.83   & 121  /  \z\z10.54    & 149 /   \z\z26.13      &   167 / \z\z\z57.27   &  177  /     \z\z128.68  \\
\hline
32 & 143  / \z122.28   &  165  /  \z236.47 &    177  /   \z400.79 &        193 / \z\z746.28 &  197 / \z1230.60    \\
\hline
64 & 165  / 3657.33   &   168 / 4733.98  &     175  /  6596.99   &     179     / 15894.01 &    184 / 20215.23 \\
\hline
\end{tabular} }
\vskip 2mm  {
\begin{tabular}{|r|c|c|c|c|c|}
\hline 
& \multicolumn{5}{|c|}{ \  $\widehat{\mathbf{A}}^{\vect{G}}$\quad Iterations / Time } \\
\hline
$n_{el}$ & $p=1$ & $p=2$ & $p=3$ & $p=4$ & $p=5$  \\
\hline
8 & 14 /  \z\z0.30 & 15 / \z\z0.50  &  19 / \z\z0.71    &   20 /\z\z\z1.11   &    23 / \z\z\z1.98 \\
\hline
16 &  18 / \z\z0.87 &  19 / \z\z1.66 &   21 / \z\z2.79 &  23 / \z\z\z5.77    &  25 / \z\z14.12     \\
\hline
32 & 22 / \z\z8.88  &  24 / \z16.08    &  25 / \z29.66  &  26 / \z\z61.22  &    27 / \z114.93 \\
\hline
64 &  26 / 207.70   &   27 /  303.33 &     28  /  495.29   &          29 / 1118.44 &  30  / 1923.20 \\
\hline  
\end{tabular}}
\end{center}
\caption{Hollow torus domain. Performance of $\widehat{\mathbf{A}}  $  and $ \widehat{\mathbf{A}}^{\vect{G}}$.}
\label{tab:hollow_torus}
\end{table}}
 }
\section{Conclusions}
 In this work we proposed a preconditioner suited for a space-time
 Galerkin  isogeometric discretization of the heat equation. Our
 preconditioner $\widehat{\mathbf{A}}$ is represented by a suitable
 sum of Kronecker products of matrices, that makes the computational
 cost of its construction (setup) and application, as well as the storage cost, very appealing.
In particular the application of the preconditioner, inspired by
the  FD technique,  exploits an ad-hoc factorization
of the time matrices. The preconditioner  cost seen in  numerical  tests,  for a serial
single core execution, is almost equal to $O(N_{dof})$ and does not
depend on the polynomial degree. 

 
At the same time, the storage cost is roughly the same that we would have by discretizing separately in space and in
time, if we assume $n_t\leq Cp^dN_s$. Indeed, in this case the memory
used for the whole iterative solver is $O(p^dN_s+N_{dof}).$


In this paper, we have restricted ourselves to the case of a fixed domain and of constant (or separable) coefficients.
However, the proposed approach can be extended to the case where the domain changes over time and/or the coefficients of the equation are not separable.
Clearly, in these cases the matrix $\mathbf{A}$ is no longer the sum of Kronecker products as in \eqref{eq:syst_mat}, and its storage is likely unfeasible in practical problems.
A possible way to circumvent this issue is to switch to a matrix-free approach \cite{Sangalli2018}, where the matrix is not stored and is available only to compute matrix-vector products.
To build the preconditioner, the integral kernels that appear in the matrix entries should be replaced by separable approximations.
This can be done using the same technique described in Section \ref{sec:prec_geo}, at the (optimal) cost of $O(N_{dof})$ flops.
A similar approach can be used if we consider a nonlinear problem, where a linear system of the form \eqref{eq:sys_solve} has to be solved at each step of a nonlinear iteration.
Note that in this case the preconditioner has to be build from scratch every time, as the matrix $\mathbf{A}$ changes at every iteration. 
This, however, is not an issue, since  as discussed in Section \ref{sec:cost_memory} the setup cost for the preconditioner is optimal (or even negligible) and independent of $p$. 

As a final comment, we mention that our method has a strong potential for parallelization, and this will be an interesting future direction of study.

\section*{Acknowledgments}

The authors were partially supported by the European Research Council
through the FP7 Ideas Consolidator Grant HIGEOM n.616563. The
authors are members of the Gruppo Nazionale Calcolo
Scientifico-Istituto Nazionale di Alta Matematica (GNCS-INDAM)  and
the second  author  was partially supported by  INDAM-GNCS
``Finanziamento Giovani Ricercatori 2019-20" for the project
``Efficiente risoluzione dell'equazione di Navier-Stokes in ambito
isogeometrico". These supports are gratefully acknowledged.

\bibliography{biblio_space_time}
\end{document}